\documentclass[12pt]{article}
\usepackage[utf8]{inputenc}
\usepackage{amsmath}
\usepackage{amssymb}
\usepackage{amsthm}
\usepackage{fullpage}
\usepackage{graphicx}
\usepackage{hyperref}
\usepackage{color}
\usepackage[sc]{mathpazo}
\linespread{1.05}
\usepackage[T1]{fontenc}
\usepackage[margin=10pt,font=small]{caption}
\usepackage{booktabs}

\newtheorem{thm}{Theorem}[section]
\newtheorem{prop}[thm]{Proposition}
\newtheorem{lem}[thm]{Lemma}
\newtheorem{coro}[thm]{Corollary}
\theoremstyle{remark}
\newtheorem{rmk}{Remark}

\newcommand{\tdef}[1]{\textcolor{blue}{\emph{#1}}}
\newcommand{\horiz}{\begin{center}\rule{0.3\textwidth}{0.5pt}\end{center}}

\newcommand{\id}{\operatorname{id}}
\newcommand{\cyc}{\operatorname{cyc}}
\newcommand{\hook}{\mathfrak{h}}
\newcommand{\sk}{\operatorname{sk}}
\newcommand{\ct}{\operatorname{ct}}

\author{
  Yichao Chen \\ 
  {\small School of Mathematics and Physics} \\ 
  {\small SuZhou University of Science and Technolgy, 215009 SuZhou, China} \\ 
  {\small \url{chengraph@163.com}}
  \and 
  Wenjie Fang\thanks{\textit{Recherche en esprit libre}, \textit{i.e.}, not supported by any funding with precise predefined goals.} \\
  {\small LIGM, Université Gustave Eiffel, CNRS, ESIEE Paris, 77454 Marne-la-Vallée, France} \\
  {\small \url{wenjie.fang@u-pem.fr}}
}

\title{A character approach to directed genus distribution of graphs: the bipartite single-black-vertex case}

\begin{document}

\maketitle

\abstract{Given an Eulerian digraph, we consider the genus distribution of its face-oriented embeddings. We prove that such distribution is log-concave for two families of Eulerian digraphs, thus giving a positive answer for these families to a question asked in Bonnington, Conder, Morton and McKenna (2002). Our proof uses real-rooted polynomials and the representation theory of the symmetric group $\mathbb{S}_n$. The result is also extended to some factorizations of the identity in $\mathbb{S}_n$ that are rotation systems of some families of one-face constellations.}

\horiz

In topological graph theory, we consider embeddings of graphs on surfaces. For a fixed graph, it may be embedded into surfaces of different genera, even when restricted to cellular embeddings. There have been some effort to asymptotically count graphs that can be embedded in a given surface, with consequences in probabilities \cite{graphfive, cubic-asympt}. We are here interested in the dual question: for a fixed graph, how many ways it can be cellularly embedded into surfaces of different genera?

In this article, we only consider orientable closed surfaces. For an edge-labeled graph $G$, its \emph{genus polynomial} $P_G(x)$ is defined by
\[
  \Gamma_G(x) = \sum_{M \text{ cellular embedding of } G} x^{g(M)},
\]
with $g(M)$ the genus of $M$. The coefficients of the genus polynomial of $G$ give the \emph{genus distribution} of $G$. It was conjectured by Gross, Robbins and Tucker in \cite{bouquet-genus} that the genus distribution of every graph is log-concave, and it was confirmed on bouquets of circles with a formula in \cite{jackson-character} using characters in the symmetric group. Stahl provided more confirmations in \cite{stahl-genus} by proving that the genus polynomials of several families of graphs are real-rooted, a stronger property than log-concavity. Based on such results, Stahl conjectured in \cite{stahl-genus} that genus polynomials for graphs are all real-rooted. An attempt to refute this conjecture was given in \cite{counter-example-undirected}, but was found to be erroneous in \cite{defend-stahl}. Later a valid counter-example was given in \cite{stahl-refute}. Nevertheless, Stahl's work still suggests real-rootedness as a possible angle of attack for log-concavity in some cases. The original conjecture in \cite{bouquet-genus}, however, has received further confirmations (see, \textit{e.g.}, \cite{log-concave-genus}), but remains open.

The genus distribution problem is later extended to directed graphs, or \emph{digraphs} for short. More precisely, we are interested in \emph{face-directed embeddings} of Eulerian digraphs, \textit{i.e.}, cellular embeddings where each face has all its adjacent edges oriented in the same way. Precise definitions of related notions are postponed to Section~\ref{sec:prelim}. Such embeddings were first considered by Bonnington, Conder, Morton and McKenna in \cite{embed-digraph}. Similarly, for an edge-labeled Eulerian digraph $D$, its genus polynomial $P_D(x)$ is defined by
\[
  \Gamma_D(x) = \sum_{M \text{ face-directed embedding of } D} x^{g(M)}.
\]
At the end of \cite{embed-digraph}, it was asked whether genus polynomials of Eulerian digraphs are always log-concave, or at least unimodal. This question mirrors the conjecture in \cite{bouquet-genus}, and was confirmed on several families of Eulerian digraphs, most notably on 4-regular outerplanar digraphs \cite{four-reg-genus}.

The results in this article follow the same direction, providing more confirmation on the log-concavity of genus distribution of Eulerian digraphs. More precisely, we prove the following result.

\begin{thm} \label{thm:main-log-concave}
  The genus distributions of the following Eulerian digraphs are log-concave:
  \begin{itemize}
  \item bipartite Eulerian digraphs with only one black vertex;
  \item Eulerian fans (see Section~\ref{sec:other} for definition).
  \end{itemize}
\end{thm}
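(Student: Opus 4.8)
The plan is to prove the stronger statement — that both genus polynomials are real-rooted — from which log-concavity follows by Newton's inequalities. The route is character-theoretic. For a bipartite single-black-vertex Eulerian digraph $D$, with black vertex of in-degree $=$ out-degree $=n$ and white vertices of degrees $d_1,\dots,d_k$, I encode a face-directed embedding by its rotation system and translate it into the constellation dictionary: the alternating rotation at the black vertex folds into a full $n$-cycle $\gamma$, the white rotations give a permutation of cycle type $\mu=(d_1,\dots,d_k)$, and the faces are the cycles of the remaining factor $\sigma$ in a relation $\gamma\cdot(\text{white part})\cdot\sigma=\mathrm{id}$. By Euler's formula the genus is an affine function of the number of faces, so $\Gamma_D$ is, up to a monomial prefactor and an affine substitution of the variable, equal to $\sum_\rho N_\rho\, y^{\ell(\rho)}$, where $N_\rho$ counts the embeddings whose face permutation has cycle type $\rho$ and $\ell(\rho)$ is its number of cycles. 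The Frobenius formula for connection coefficients expresses $N_\rho$ as $\tfrac{|C_{(n)}|\,|C_\mu|\,|C_\rho|}{n!}\sum_{\lambda\vdash n}\tfrac{\chi^\lambda_{(n)}\chi^\lambda_\mu\chi^\lambda_\rho}{\dim\lambda}$.

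Summing this against $y^{\ell(\rho)}$ and using the Jucys--Murphy content identity $\sum_{\sigma\in\mathbb{S}_n}\chi^\lambda(\sigma)\,y^{\ell(\sigma)}=\dim\lambda\prod_{(i,j)\in\lambda}(y+j-i)$ eliminates the $\rho$-sum and leaves $\Gamma_D\propto\sum_\lambda\chi^\lambda_{(n)}\chi^\lambda_\mu\prod_{(i,j)\in\lambda}(y+j-i)$. By the Murnaghan--Nakayama rule, $\chi^\lambda_{(n)}$ vanishes unless $\lambda=(n-r,1^r)$ is a hook, where it equals $(-1)^r$; and the cell contents of such a hook are $0,1,\dots,n-1-r$ along the arm and $-1,\dots,-r$ down the leg. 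Hence, after substituting back,
\[
  \Gamma_D(x)\ \propto\ \sum_{r=0}^{n-1}(-1)^r\,\chi^{(n-r,1^r)}_\mu\;x\prod_{a=1}^{n-1-r}(x+a)\prod_{b=1}^{r}(x-b).
\]

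It remains to show this alternating sum is real-rooted, which is the main obstacle. The key tool is the closed form $\sum_{r\ge0}\chi^{(n-r,1^r)}_\mu\,q^r=(1+q)^{-1}\prod_{j=1}^{k}\bigl(1-(-q)^{d_j}\bigr)$ for the hook-character generating function at cycle type $\mu$, which I would derive from the Murnaghan--Nakayama rule together with the multiplicativity of power sums. Feeding it into the display, I expect either to absorb the $r$-dependent factorial factors — by reading the sum as a residue or umbral extraction from $(1+q)^{-1}\prod_j(1-(-q)^{d_j})$ — into an explicit factorization of $\Gamma_D(x)$ into linear and a bounded number of real-rooted quadratic factors, or, lacking such a product, to run an induction on $k$ in which appending a white vertex of degree $d$ acts on $\Gamma_D$ by a linear operator that is diagonal in the rising-factorial basis with eigenvalues forming a P\'olya--Schur multiplier sequence, hence real-rootedness preserving, with interlacing of consecutive truncations providing the induction step. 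The difficulty is precisely that the signs $(-1)^r\chi^{(n-r,1^r)}_\mu$ alternate, so no term-by-term positivity is available and one must exploit the cancellation encoded in the generating function; keeping track of the parity of $\ell(\rho)$, which is constant, is what guarantees that the affine substitution back to $x$ returns a genuine polynomial with non-negative coefficients and real roots, so that Newton's inequalities give log-concavity.

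For Eulerian fans the same pipeline applies once their rotation systems are recognized as those of one-face constellations; the rigid structure of a fan should collapse the character sum to an essentially one-parameter family indexed by the fan size, whose real-rootedness then follows either from an explicit product or from a three-term recurrence in the fan size whose consecutive polynomials interlace — a standard Sturm-type argument. In both families, real-rootedness yields log-concavity, proving Theorem~\ref{thm:main-log-concave}.
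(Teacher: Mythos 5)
Your proposal follows the right broad strategy (reduce to a character sum over hooks, use the hook-character generating function, prove real-rootedness), but it has two genuine gaps. First, the combinatorial encoding is wrong. For a face-oriented embedding the faces are $2$-colored (all edges clockwise versus all counterclockwise), and after folding the $2n$ edges into $n$ in/out pairs the rotation system is a factorization of the identity in $\mathbb{S}_n$ into \emph{four} permutations, $\phi_\circ\sigma_\circ\phi_\bullet\sigma_\bullet=\id_n$ with $\sigma_\bullet$ an $n$-cycle and $\sigma_\circ$ of type $\lambda$; Euler's formula uses the total number of faces $\cyc(\phi_\circ)+\cyc(\phi_\bullet)$, the sum of \emph{two} cycle counts, not the cycle count of a single ``remaining factor''. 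In your three-factor relation $\gamma\cdot(\text{white part})\cdot\sigma=\id$ the face factor is determined by the two vertex rotations, so your model accounts for only about $(n-1)!\,|C(\lambda)|$ embeddings (up to labeling factors), whereas the true set of face-oriented embeddings of $D_{n,\lambda}$ carries an extra free permutation of $\mathbb{S}_n$ (one of the two face permutations). Consequently the content product must appear squared, $H_{n,a}(t)^2$ (these are one-face $3$-constellations, and more generally the exponent is $m-1$), not to the first power as in your display; the polynomial you derived is the answer to a different enumeration problem.

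Second, the heart of the theorem --- real-rootedness of the alternating hook sum --- is not actually proved. You correctly identify the key identity $\sum_a(-1)^a\chi^{\hook(n,a)}_\lambda z^a=\frac{1}{1-z}\prod_j(1-z^{\lambda_j})$, but then only list hoped-for routes (an explicit factorization, or a P\'olya--Schur multiplier / interlacing induction) without establishing either, and neither is readily available since the relevant operator is not diagonal in any standard basis. The step that closes the argument is to rewrite the sum as $R_{n,\lambda}(\mathbf{T})$ applied to $H_{n,0}(t)^{m-1}=(t+n-1)_n^{m-1}$, where $\mathbf{T}$ is the backward shift $t\mapsto t-1$ and $R_{n,\lambda}(z)=\frac{1}{1-z}\prod_j(1-z^{\lambda_j})$ has all its zeros on the unit circle, and to prove, by induction on the degree of the symbol with a modulus-comparison argument (in the spirit of Stanley and Postnikov--Stanley), that such an operator sends powers of the falling factorial to polynomials whose roots all lie on one vertical line; the substitution $x=t^{-2}$ then gives real non-positive roots of $\Gamma_{n,\lambda}(x)$, hence log-concavity by Newton. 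Finally, your treatment of Eulerian fans assumes they form ``an essentially one-parameter family'', which they do not (a fan is any Eulerian digraph with a vertex whose removal leaves a directed forest); the missing idea is a genus-preserving contraction of each forest component into a single white vertex, which shows that $\Gamma_D(x)$ is a positive constant multiple of $\Gamma_{n,\lambda}(x)$ and therefore inherits real-rootedness and log-concavity directly --- no recurrence or Sturm-type argument is needed.
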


This result is a combination of Corollary~\ref{coro:dipole} and Proposition~\ref{prop:eulerian-fan}. In fact, we prove the stronger result that the genus polynomials of Eulerian digraphs in Theorem~\ref{thm:main-log-concave} have all their roots real and negative, which implies the log-concavity. For the first family, it is done by first using the representation theory of the symmetric group in the spirit of \cite{bouquet-genus} to obtain an explicit expression of related genus polynomials (Theorem~\ref{thm:p-expr}), then proving their real-rootedness using arguments (Proposition~\ref{prop:shift}) adapted from \cite{stanley-factorization,postnikov-stanley}. We then computed their asymptotic genus distribution (Theorem~\ref{thm:asympt-dist}). For the second family, our proof relies on a bijective link to graphs in the first family (see Proposition~\ref{prop:eulerian-fan}).

Some of our results (Theorem~\ref{thm:main} and Theorem~\ref{thm:asympt-dist}) also apply to the genus distribution of factorizations of the identity in the symmetric group into $(m+1)$ permutations, one of them a large cycle and another one with given cycle type. Such factorizations correspond to so-called \emph{one-face $m$-constellations} with given degree profile for vertices of one of the colors.

The rest of the article is organized as follows. In Section~\ref{sec:prelim}, we introduce some notions that we need such as the relevant rotation systems. Then in Section~\ref{sec:chara}, we use characters of the symmetric group $\mathbb{S}_n$ to express the genus polynomials of the Eulerian digraphs that we study, and more generally for some families of factorizations in $\mathbb{S}_n$. These genus polynomials are then proved to be real-rooted with non-positive roots in Section~\ref{sec:realrooted}, leading to log-concave genus distribution. We compute the asymptotic genus distributions in Section~\ref{sec:asympt}. Then in Section~\ref{sec:other} we use graph transformations to transfer genus distribution results in Section~\ref{sec:realrooted} to another family of Eulerian digraphs.

\section{Preliminaries} \label{sec:prelim}

\subsection{Digraphs and maps}

In this work, we consider both undirected graphs (or simply \emph{graphs}) and directed graphs (or \emph{digraphs} for short) with possible loops and multiedges, all coming in \emph{edge-labeled} version. Furthermore, we only consider the \emph{connected} ones. A digraph $D$ is \tdef{Eulerian} if every vertex has equal in-degree and out-degree. A graph or a digraph is \tdef{bipartite} if its vertices can be properly 2-colored, say black and white.

We now consider embeddings of graphs. A \tdef{combinatorial map} (or simply \tdef{map}) on a connected orientable compact surface $\mathbb{S}$ is an embedding $M$ of a graph $G$ into $\mathbb{S}$ that is \tdef{cellular}, \textit{i.e.}, all the connected components in $\mathbb{S} \setminus M$, which are called \tdef{faces}, are topological disks. Maps are defined up to orientation-preserving homeomorphism. We note that the maps we consider are \emph{edge-labeled} but not rooted, in contrast to unlabeled rooted maps in most literature on maps. However, it is known that unlabeled rooted maps with $n$ edges are in $1$-to-$(n-1)!$ correspondence with edge-labeled unrooted maps. Figure~\ref{fig:map-example}(a) shows an example of a map, which is also bipartite. The \tdef{genus} of a map $M$, denoted by $g(M)$, is the genus of the surface $\mathbb{S}$ on which it lives. The genus of a map can be computed using Euler's relation: for a map $M$ with $v$ vertices, $e$ edges and $f$ faces, its genus $g$ satisfies
\begin{equation}
  \label{eq:euler}
  v - e + f = 2 - 2g.
\end{equation}

Similarly, cellular embeddings of a digraph are called \tdef{directed maps}, which are just maps with edges oriented. A directed map is \tdef{face-oriented} if, for every face, the adjacent edges are all oriented in the same direction. Face-directed embeddings of Eulerian digraphs are simply face-oriented maps. Figure~\ref{fig:map-example}(b) shows an example of a face-oriented directed map which is also bipartite.

\begin{figure}[!thbp]
  \centering
  \includegraphics[width=0.8\textwidth,page=2]{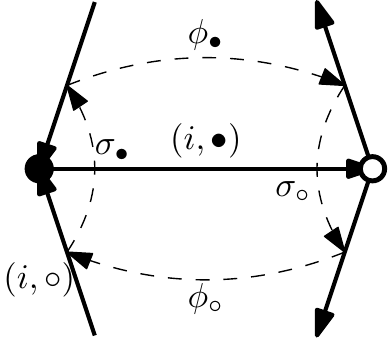}
  \caption{Examples of a bipartite map, a face-oriented bipartite map, and its corresponding Eulerian map (edge labels are omitted)}
  \label{fig:map-example}
\end{figure}

We now introduce another family of maps. A map is \tdef{Eulerian} if its faces can be properly 2-colored. See Figure~\ref{fig:map-example}(c) for an example of Eulerian map that is also bipartite. We note that the word ``Eulerian'' has a different meaning here from that in ``Eulerian digraphs''. Such maps are in bijection with face-oriented directed maps.

\begin{prop} \label{prop:bieulerian}
  The set of face-oriented directed maps with $n$ edges is in bijection with the set of Eulerian maps with $n$ edges.
\end{prop}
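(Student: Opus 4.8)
The plan is to write down the bijection by hand and check that the two directions are inverse. In one direction, take a face-oriented directed map $M$ and keep its underlying graph, surface and embedding unchanged; I then color its faces by the rule that, walking along the boundary of a face $f$, one meets all incident edges in a coherent direction (this is exactly face-orientedness), so either $f$ lies on the left of every edge of $\partial f$ — color $f$ black — or on the right of every such edge — color $f$ white. In the other direction, take an Eulerian map equipped with a proper $2$-coloring of its faces and orient every edge so that the black face it borders lies to its left; this is well defined because a proper $2$-coloring forces each edge to border exactly one black and one white face (in particular no face borders itself).

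Next I would verify that each construction outputs an object of the correct kind and that the two are mutually inverse. For the first construction, the coloring obtained is proper: if faces $f$ and $f'$ share an edge $e$, the boundary walks of $f$ and $f'$ run along $e$ in opposite directions, so $f$ is on the left of $e$ exactly when $f'$ is on the right of $e$, and hence $f,f'$ get opposite colors. For the second construction, the directed map obtained is face-oriented essentially by design: a black face has, by construction, all of its incident edges oriented with it on the left, and a white face has all of them oriented with it on the right. Being inverse is then bookkeeping: orienting ``black on the left'' and then recoloring recovers the black faces as precisely those lying to the left of their edges; and starting from a face-oriented directed map, recording the ``on the left''/``on the right'' dichotomy and re-orienting ``black on the left'' restores the original orientation. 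Since nothing but the edge directions / face colors is touched, the underlying graph, its edge labels, the surface and the genus are unchanged, so the bijection respects the number of edges $n$ (and, incidentally, the genus, which is what makes it relevant to genus distributions).

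The point needing the most care is the well-definedness of the coloring in the first direction — that ``$f$ on the left of all its edges'' and ``$f$ on the right of all its edges'' really is an exhaustive dichotomy. This uses that in a face-oriented directed map no edge occurs twice on the boundary walk of a single face: were $e$ to border one face $f$ on both sides, the boundary walk of $f$ would traverse $e$ once in each direction, contradicting coherence of the edge orientations along $\partial f$. (On the other side the matching fact, that no face of an Eulerian map is self-adjacent, is built into properness of the coloring, so the corresponding check is automatic.) Two minor remarks to include: since we only consider connected maps, the proper $2$-face-coloring is unique up to swapping the two colors, which on the directed side is exactly the reversal of all edges, so the statement is robust to the bookkeeping of colored versus uncolored conventions; and, if a more algebraic proof is preferred, one can phrase everything through rotation systems — writing a map as $(\sigma,\alpha)$ on its darts with faces the cycles of $\varphi=\sigma\alpha$, a face-orientation is a transversal $T$ of $\alpha$ that is a union of $\varphi$-cycles, a proper $2$-coloring is a $\{0,1\}$-valued function constant on $\varphi$-cycles and flipped by $\alpha$, and $T\mapsto\mathbf 1_T$, $\chi\mapsto\chi^{-1}(1)$ are visibly inverse and equivariant under relabeling of darts.
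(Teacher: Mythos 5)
Your proof is correct and follows essentially the same route as the paper: color each face black or white according to the coherent direction of its boundary (left/right, which is the paper's clockwise/counterclockwise convention), forget the orientations to get an Eulerian map, and invert by orienting every edge according to the colors of the two faces it borders. Your additional checks (properness of the coloring, exhaustiveness of the left/right dichotomy, and the colored-versus-uncolored bookkeeping for connected maps) merely spell out what the paper's short argument leaves implicit.
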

\begin{proof}
  Let $\vec{M}$ be a face-oriented directed maps with $n$ edges. We color by white the faces in $\vec{M}$ with edges oriented in the clockwise direction, and by black all others. Now, by forgetting edge orientation, we obtain an Eulerian map $M$, since the faces on the two sides of an edge cannot have the same orientation. To go back from $M$ to $\vec{M}$, we only need to give the edges an orientation according to face color, which is consistent because $M$ is Eulerian.
\end{proof}

An example of this correspondence can be seen in Figure~\ref{fig:map-example}~(b)~and~(c). By Proposition~\ref{prop:bieulerian}, the study of the genus distribution of a given digraph $D$ reduces to the enumeration of Eulerian maps with $D$ as the underlying digraph.

\subsection{Log-concavity and real-rootedness}

For a digraph $D$, we denote by $\mathcal{M}(D)$ the set of Eulerian maps corresponding to face-oriented embeddings of $D$ as indicated in Proposition~\ref{prop:bieulerian}. The \tdef{genus polynomial} of $D$, denoted by $\Gamma_D(x)$, is defined by
\begin{equation}
  \label{eq:genus-poly-def}
  \Gamma_D(x) = \sum_{M \in \mathcal{M}(D)} x^{g(M)}.
\end{equation}
Given a polynomial $P(x)$, we denote by $[x^k]P(x)$ the coefficient of $x^k$ in $P(x)$. It is clear that $\Gamma_D(x)$ encodes all the information of the genus distribution of directed face-oriented embeddings of $D$, since $[x^g]\Gamma_D(x)$ is the number of face-oriented embeddings of $D$ of genus $g$.

We now introduce a few notions for the study of genus distributions. A finite sequence of strictly positive real numbers $A = (a_0, \ldots, a_n)$ is \tdef{log-concave} if $a_k^2 \geq a_{k-1}a_{k+1}$ holds for all $0 < k < n$. Such a sequence is unimodal. We define the \tdef{generating polynomial} of $A$ to be $P_A(x) = \sum_{k=0}^n a_k x^k$. We say that $P_A$ is \tdef{real-rooted} if all roots of $P_A$ are real. The following result is well-known and attributed to Newton.

\begin{lem} \label{lem:real-rooted}
If the generating polynomial $P_A$ of a non-negative sequence $A$ is real-rooted, then $A$ is log-concave.
\end{lem}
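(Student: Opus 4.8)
The plan is to prove the classical result of Newton relating real-rootedness of a polynomial with non-negative coefficients to log-concavity of those coefficients.

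\begin{proof}[Proof sketch]
Write $P_A(x) = \sum_{k=0}^n a_k x^k$ with all $a_k \geq 0$. First I would reduce to the case where all $a_k$ are strictly positive: if $P_A \equiv 0$ there is nothing to prove, and otherwise, since a real-rooted polynomial with non-negative coefficients cannot have an ``internal'' zero coefficient (a positive root would contradict $a_k \geq 0$, so all real roots are $\leq 0$, and a polynomial with all roots $\leq 0$ and a zero coefficient strictly between two nonzero ones would force, upon factoring out the maximal power $x^j$ of $x$, a polynomial with all positive coefficients having a vanishing coefficient — impossible since such a product $\prod (x + r_i)$ with $r_i \geq 0$ has all coefficients positive when every $r_i > 0$, and the case of a further $r_i = 0$ is absorbed into $x^j$). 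Hence after factoring $P_A(x) = x^j Q(x)$ we may assume $a_0, \ldots, a_n > 0$.

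The key step is to show that the operations of differentiation and of reversing the coefficient order both preserve real-rootedness. For $Q(x) = \sum_{k=0}^n a_k x^k$ real-rooted, its derivative $Q'(x)$ is real-rooted by Rolle's theorem (counting multiplicities: between consecutive distinct roots lies a root of $Q'$, and a root of $Q$ of multiplicity $m \geq 2$ is a root of $Q'$ of multiplicity $m-1$; this accounts for all $n-1$ roots of $Q'$). Likewise the reversal $x^n Q(1/x) = \sum_{k=0}^n a_{n-k} x^k$ is real-rooted, since $r \neq 0$ is a root of $Q$ iff $1/r$ is a root of the reversal, and $0$ is not a root because $a_0 \neq 0$. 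Now fix an index $k$ with $0 < k < n$; I would apply $k-1$ differentiations, then reverse, then apply $n-k-1$ differentiations, then reverse again, obtaining a real-rooted quadratic $\alpha x^2 + \beta x + \gamma$ whose coefficients are, up to a common positive factorial factor, $a_{k+1}, a_k, a_{k-1}$ respectively. A real-rooted quadratic has non-negative discriminant, giving $\beta^2 \geq 4\alpha\gamma$, hence $a_k^2 \geq \binom{2}{1}^{-1}\cdot 4\, a_{k-1} a_{k+1}\cdot(\text{positive constant})$; tracking the constants shows precisely $a_k^2 \geq \frac{k+1}{k}\cdot\frac{n-k+1}{n-k}\, a_{k-1} a_{k+1} \geq a_{k-1} a_{k+1}$, which is the log-concavity inequality (in fact with the stronger Newton constant). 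Applying this to $Q$ and then noting that prepending $j$ zeros via the factor $x^j$ does not affect any of the relevant inequalities among positive coefficients completes the argument.

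The main obstacle, such as it is, is purely bookkeeping: carefully verifying that the composition of differentiations and reversals lands exactly on the triple $(a_{k-1}, a_k, a_{k+1})$ with correctly tracked positive multiplicative constants, and handling the boundary cases $k=1$ or $k=n-1$ where one of the differentiation counts is zero. None of this is deep, and the whole statement is standard; I would likely just cite it, but the above is the self-contained route.
\end{proof}
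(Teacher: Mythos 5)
The paper does not prove this lemma at all: it is stated as a classical fact attributed to Newton, with the reader pointed to the log-concavity survey cited there, so any comparison is between your self-contained argument and a citation. Your sketch is the standard proof of Newton's inequalities and is essentially correct: differentiation preserves real-rootedness by Rolle's theorem with multiplicities, reversal preserves it when the constant term is nonzero (which holds at the point you reverse, since after $k-1$ differentiations the constant term is $(k-1)!\,a_{k-1}>0$), and the resulting real-rooted quadratic has non-negative discriminant, yielding $a_k^2 \geq \frac{(k+1)(n-k+1)}{k(n-k)}\,a_{k-1}a_{k+1} \geq a_{k-1}a_{k+1}$. Your reduction disposing of zero coefficients is also sound: a real-rooted polynomial with non-negative coefficients factors as $x^j$ times a product of factors $(x+r_i)$ with $r_i>0$, so zero coefficients can only occur at the low end (or, if the sequence is longer than the degree, at the high end), and in either case the affected inequalities hold trivially because one side vanishes. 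Two cosmetic points: the intermediate sentence with ``$\binom{2}{1}^{-1}\cdot 4\,a_{k-1}a_{k+1}\cdot(\text{positive constant})$'' is garbled and should simply be replaced by the explicit bookkeeping that leads to the Newton constant you state at the end; and you could note explicitly that the trailing-zero case (degree of $P_A$ less than the length of $A$) is covered by the same trivial-inequality remark as the leading zeros. What your route buys over the paper's citation is an elementary, self-contained verification, at the cost of the factorial bookkeeping you acknowledge; for the purposes of this paper the citation suffices.
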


In fact, real-rootedness is much stronger than log-concavity, see Lemma~1.1 in \cite{log-concave-survey}. Using the following result from Bender \cite{bender-normal}, real-rootedness can also be used to analyze the asymptotic distribution of some combinatorial sequences.

\begin{lem}[Theorem~2 in \cite{bender-normal}, see also Lemma~2.1 in \cite{log-concave-survey}] \label{lem:rr-dist}
  Let $(X_n)_{n \geq 1}$ be a sequence of random variables, with $X_n$ taking values in $\{0, 1, \ldots, n\}$, and $(P_n(x))_{n \geq 1}$ their generating polynomials defined by $[x^k]P_n(x) = \mathbb{P}[X_n = k]$. Suppose that
  \begin{itemize}
  \item $P_n(x)$ is real-rooted and has only non-positive roots;
  \item $\mathbf{Var}(X_n) \to \infty$.
  \end{itemize}
  Then we have the following convergence to the normal distribution:
  \[
    \frac{X_n - \mathbb{E}[X_n]}{\mathbf{Var}(X_n)^{1/2}} \overset{L^{\infty}}{\longrightarrow} \mathcal{N}(0,1).
  \]
\end{lem}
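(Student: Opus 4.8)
The plan is to show that, under the hypotheses, each $X_n$ is—in distribution—a sum of independent Bernoulli variables, and then to invoke a classical central limit theorem for triangular arrays. First I would note that $P_n$ has non-negative coefficients (they are probabilities) and satisfies $P_n(1) = \sum_k \mathbb{P}[X_n = k] = 1$, while by hypothesis all its roots are real and non-positive. Writing $d_n = \deg P_n$ and listing the roots (with multiplicity) as $-r_1, \ldots, -r_{d_n}$ with each $r_i \ge 0$, the normalization $P_n(1) = 1$ forces
\[
  P_n(x) = \prod_{i=1}^{d_n} \frac{x + r_i}{1 + r_i} = \prod_{i=1}^{d_n}\bigl( (1 - p_i) + p_i\, x \bigr), \qquad p_i := \frac{1}{1 + r_i} \in (0, 1].
\]
Hence $X_n$ has the same law as $B_1 + \cdots + B_{d_n}$ with the $B_i$ independent and $\mathbb{P}[B_i = 1] = p_i = 1 - \mathbb{P}[B_i = 0]$; a root at $0$ simply gives $p_i = 1$, a deterministic summand. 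Consequently $\mathbb{E}[X_n] = \sum_i p_i$ and $s_n^2 := \mathbf{Var}(X_n) = \sum_i p_i(1 - p_i)$. (If $d_n = 0$ then $X_n \equiv 0$ with zero variance, which the second hypothesis rules out for all large $n$, so this degenerate case is harmless.)

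Next I would apply the Lindeberg--Feller central limit theorem to the triangular array $\bigl(B_i - p_i\bigr)_{1 \le i \le d_n}$. The essential observation is that the centred summands are uniformly bounded: $|B_i - p_i| \le 1$ always. Therefore, for any fixed $\varepsilon > 0$, as soon as $s_n > 1/\varepsilon$—which holds for all large $n$ since $s_n^2 \to \infty$—every truncation term $\mathbb{E}\bigl[(B_i - p_i)^2\, \mathbf{1}_{|B_i - p_i| > \varepsilon s_n}\bigr]$ vanishes identically, so the Lindeberg condition is trivially met. The Lindeberg--Feller theorem then yields that $(X_n - \mathbb{E}[X_n])/s_n$ converges in distribution to $\mathcal{N}(0,1)$.

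Finally, to obtain the stated $L^\infty$ (i.e.\ uniform) convergence of the distribution functions rather than mere weak convergence, I would invoke Pólya's theorem: since the limiting distribution function $\Phi$ is continuous (indeed uniformly continuous) on $\mathbb{R}$, convergence in distribution to $\mathcal{N}(0,1)$ automatically upgrades to $\sup_{t \in \mathbb{R}} |F_n(t) - \Phi(t)| \to 0$, where $F_n$ denotes the distribution function of $(X_n - \mathbb{E}[X_n])/s_n$. This completes the argument.

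The only genuinely delicate points here are bookkeeping ones—checking that the factorization into Bernoulli factors is legitimate in all edge cases (repeated roots, a root at $0$, and $\deg P_n < n$), and that the variance hypothesis indeed excludes the degenerate situations—since the analytic content is entirely carried by the classical Lindeberg--Feller and Pólya theorems, with no intricate estimates required. Alternatively, one may simply cite \cite{bender-normal} directly, which is the route we take.
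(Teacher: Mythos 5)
Your argument is correct, and it is worth noting that the paper itself offers no proof of this lemma at all: it is quoted verbatim from Bender (Theorem~2 of the cited reference, via the log-concavity survey), exactly as your closing sentence suggests. Your write-up supplies the standard ``Harper-style'' proof behind that citation: since $P_n$ has non-negative coefficients, $P_n(1)=1$, and only real non-positive roots, it factors as $\prod_i\bigl((1-p_i)+p_i x\bigr)$ with $p_i=1/(1+r_i)\in(0,1]$, so $X_n$ is a sum of independent Bernoulli variables; the uniform bound $|B_i-p_i|\le 1$ together with $\mathbf{Var}(X_n)\to\infty$ makes the Lindeberg condition vacuous for large $n$, and P\'olya's theorem upgrades weak convergence to uniform convergence of the distribution functions, which is the natural reading of the $L^\infty$ arrow in the statement. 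The edge cases you flag (roots at $0$, repeated roots, $\deg P_n<n$, the degenerate constant polynomial) are all handled correctly. The only caveat worth recording is interpretive rather than mathematical: Bender's paper also contains local limit theorems, and if the $L^\infty$ convergence were meant in that stronger pointwise-probability sense, your argument would need an additional ingredient (e.g.\ unimodality/log-concavity of Bernoulli convolutions plus the diverging variance); for the statement as written, your proof is complete, and citing Bender directly, as the paper does, is of course also sufficient.
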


Readers are referred to \cite{log-concave-survey} for further information on log-concavity and real-rootedness.

Finally, we introduce some concepts needed to describe the class of digraphs in whose genus polynomial we are interested. Let $n$ be a positive integer. An \tdef{integer partition} (or \tdef{partition} for short) $\lambda$ of $n$ is a non-increasing sequence $\lambda = (\lambda_1, \lambda_2, \ldots)$ of positive integers such that $\sum_{i \geq 1} \lambda_i = n$. In this case, we also write $\lambda \vdash n$. It is clear that $\lambda_i$ will eventually be $0$. Non-zero components are called \tdef{parts} of $\lambda$. The \tdef{length} of the partition $\lambda$, denoted by $\ell(\lambda)$, is the number of parts in $\lambda$. The number of parts in $\lambda$ equal to $i$, also called the \tdef{multiplicity} of $i$ in $\lambda$, is denoted by $m_i(\lambda)$.

Given $n > 0$ and a partition $\lambda \vdash n$, we denote by $D_{n, \lambda}$ the bipartite Eulerian digraph with a black vertex $v_\bullet$ of degree $2n$ and $\ell(\lambda)$ white vertices $v_1, v_2, \ldots, v_{\ell(\lambda)}$ such that $v_i$ has degree $2\lambda_i$ for $1 \leq i \leq \ell(\lambda)$. The labels of edges in $D_{n, \lambda}$ are given as follows: the out-going edges of $v_i$ are labeled from $1 + \sum_{j < i} \ell_j$ to $\sum_{j \leq i} \ell_j$ for each $i$, and the out-going edges of $v_\bullet$ are labeled from $n+1$ to $2n$. It is clear that $D_{n, \lambda}$ is unique and strongly connected. It is the genus polynomial $\Gamma_{D_{n, \lambda}}(x)$ of $D_{n, \lambda}$, also denoted by $\Gamma_{n, \lambda}(x)$ for convenience, that we will study.

\subsection{Rotation systems of bi-Eulerian maps}

The study of $\Gamma_{n,\lambda}(x)$ reduces to the enumeration of Eulerian maps in $\mathcal{M}(D_{n,\lambda})$ with respect to their genera. An Eulerian map is \tdef{bi-Eulerian} if it is bipartite and Eulerian. It is clear that the proof of Proposition~\ref{prop:bieulerian} specializes to the bipartite case, thus all maps in $\mathcal{M}(D_{n,\lambda})$ are bi-Eulerian.

Let $M$ be a bi-Eulerian maps of $2n$ edges. We can essentially encoded $M$ by a tuple of permutations in $\mathbb{S}_n$ called \emph{rotation system}. We first relabel $M$ such that out-going edges from black vertices are labeled $(i,\bullet)$ for $i$ from $1$ to $n$, and other edges labeled $(i,\circ)$, such that $(i,\circ)$ is the edge next to $(i,\bullet)$ in clockwise order. There is thus a $(2n)!$-to-$n!$ correspondence from bi-Eulerian maps to such relabeled maps.

Now we encode how edges with labels $(i, \bullet)$ (resp. $(i, \circ)$) surround black (resp. white) vertices in counter-clockwise order by $\sigma_\bullet$ (resp. $\sigma_\circ$). Due to the construction of the new labeling, $\sigma_\bullet$ can also be seen as recording how edges with labels $(i, \circ)$ surround black vertices. Faces in $M$ are either white (edges in clockwise direction) or black (counter-clockwise). We encode by $\phi_\circ$ (resp. $\phi_\bullet$) how edges with labels $(i,\circ)$ surround white (resp. black) faces in counter-clockwise order. We have the property that $\phi_\circ \sigma_\circ \phi_\bullet  \sigma_\bullet = \id_n$, as we can see in Figure~\ref{fig:rotation-system}.

\begin{figure}[!tbp]
  \centering
  \includegraphics[page=1,width=0.3\textwidth]{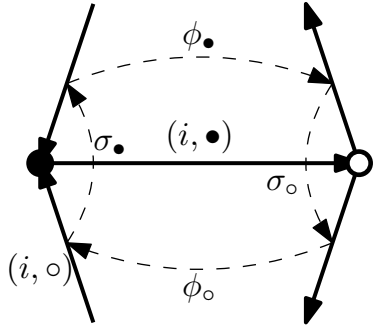}
  \caption{Rotation system and factorization property of embeddings of two-vertex bipartite Eulerian digraph}
  \label{fig:rotation-system}
\end{figure}

$M$ is encoded completely by the triple $(\sigma_\bullet, \sigma_\circ, \phi_\bullet)$, because the information of how all edges surround black vertices is contained in $\sigma_\bullet$ and the convention that $(i,\circ)$ comes next to $(i,\bullet)$ in clockwise order around their adjacent black vertex, while the same information for white vertices is similarly contained in $\sigma_\circ$ and $\sigma_\bullet^{-1} \phi_\bullet^{-1}$. We thus have the following for rotation systems of bi-Eulerian maps. We denote by $\mathbb{S}_n$ the \tdef{symmetric group} of order $n$.

\begin{prop} \label{prop:bieulerian-rot}
  Bi-Eulerian maps with $2n$ edges are in $(2n)!$-to-$n!$ correspondence with tuples $(\phi_\circ, \sigma_\circ, \phi_\bullet, \sigma_\bullet)$ of permutations in $\mathbb{S}_n$ that are transitive and satisfy $\phi_\circ \sigma_\circ \phi_\bullet \sigma_\bullet = \id_n$. We call such tuples \tdef{rotation systems} of bi-Eulerian maps.
\end{prop}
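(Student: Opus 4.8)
The bulk of the work is already done in the discussion preceding the statement; the plan is to assemble it into a genuine bijection, to pin down exactly why the single relation $\phi_\circ\sigma_\circ\phi_\bullet\sigma_\bullet=\id_n$ suffices, and to identify the transitivity condition with connectedness.

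In the forward direction, I would start from a bi-Eulerian map $M$ with $2n$ edges and apply the relabeling recipe described above (this step contributes the stated $(2n)!$-to-$n!$ multiplicity, via the $(2n)!$ ways of placing the original edge labels and the $n!$ ways of numbering the out-going edges of the black vertices): give the black out-going edges labels $(i,\bullet)$ and give the clockwise-next edge the matching label $(i,\circ)$. Reading off, through the index $i\in\{1,\dots,n\}$, the counter-clockwise cyclic orders around black vertices, white vertices, black faces and white faces then produces the four permutations $\sigma_\bullet,\sigma_\circ,\phi_\bullet,\phi_\circ$ of $\{1,\dots,n\}$. I would verify $\phi_\circ\sigma_\circ\phi_\bullet\sigma_\bullet=\id_n$ by tracing the chain of corners around a single edge, exactly as in Figure~\ref{fig:rotation-system}; the only subtlety is bookkeeping, namely keeping track of the convention that $(i,\circ)$ lies clockwise-next to $(i,\bullet)$, which is what lets $\sigma_\bullet$ simultaneously record the rotation of the $(i,\bullet)$-edges and of the $(i,\circ)$-edges around black vertices. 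Finally, the orbits of $\langle\sigma_\bullet,\sigma_\circ,\phi_\bullet,\phi_\circ\rangle$ acting on $\{1,\dots,n\}$ are precisely the connected components of $M$, so connectedness of $M$ is equivalent to transitivity of the tuple.

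For the inverse direction, I would use the standard reconstruction of a map from a rotation system, adapted to the bipartite two-face-coloured setting. Given a transitive tuple with $\phi_\circ\sigma_\circ\phi_\bullet\sigma_\bullet=\id_n$, take one black polygon for each cycle of $\phi_\bullet$, one white polygon for each cycle of $\phi_\circ$, one edge for each index $i$, and glue polygons along edges according to $\sigma_\bullet$ and $\sigma_\circ$ (equivalently, according to $\sigma_\bullet$ around black vertices and the induced $\sigma_\bullet^{-1}\phi_\bullet^{-1}$ around white vertices, as in the ``encoded completely'' remark); the relation guarantees that the boundary identifications close up to an oriented surface, and transitivity that the result is connected. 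It is cellular by construction, bipartite since its vertices come in two types (cycles of $\sigma_\bullet$ versus cycles of $\sigma_\circ$), and Eulerian since its faces are two-coloured (polygons of $\phi_\bullet$ versus polygons of $\phi_\circ$), hence bi-Eulerian with the correct relabeling. Undoing the relabeling recipe then upgrades this $1$-to-$1$ correspondence between relabeled maps and tuples to the claimed $(2n)!$-to-$n!$ correspondence.

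It remains to check that the two constructions are mutually inverse, which is routine once the conventions are fixed: from $M$ one recovers $M$ because a map is determined by the vertex-edge-face incidences together with the cyclic orders, all encoded in $(\sigma_\bullet,\sigma_\circ,\phi_\bullet)$ (with $\phi_\circ$ forced by the relation); from a tuple one recovers the same permutations by inspecting the glued polygons. I expect the main obstacle to be essentially expository, namely getting the clockwise/counter-clockwise and orientation conventions consistent throughout so that the relation that drops out is exactly $\phi_\circ\sigma_\circ\phi_\bullet\sigma_\bullet=\id_n$ and no other, and confirming that the face two-colouring really survives the reconstruction -- this last point is where splitting the face data into the two permutations $\phi_\bullet$ and $\phi_\circ$, rather than using a single face permutation, does the real work.
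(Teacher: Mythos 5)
Your proposal is correct and follows essentially the same route as the paper, which in fact states this proposition without a separate proof, treating it as a summary of the preceding discussion of the relabeling scheme, the permutations $\sigma_\bullet,\sigma_\circ,\phi_\bullet,\phi_\circ$, the relation read off from Figure~\ref{fig:rotation-system}, and the fact that $(\sigma_\bullet,\sigma_\circ,\phi_\bullet)$ determines the map. Your write-up simply makes explicit the standard polygon-gluing reconstruction and the transitivity-equals-connectedness point that the paper leaves implicit.
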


The genus of $M$ can be computed from any of its rotation systems $(\phi_\circ, \sigma_\circ, \phi_\bullet, \sigma_\bullet)$. For $\sigma \in \mathbb{S}_n$, we denote by $\cyc(\sigma)$ the number of cycles in $\sigma$. Then by construction, the number of vertices in $M$ is $\cyc(\sigma_\bullet) + \cyc(\sigma_\circ)$, while the number of face is $\cyc(\phi_\bullet) + \cyc(\phi_\circ)$. By Euler's relation \eqref{eq:euler}, we have
\begin{equation}
  \label{eq:cyc-genus}
  \cyc(\sigma_\bullet) + \cyc(\sigma_\circ) + \cyc(\phi_{ccw}) + \cyc(\phi_{cw}) = 2n + 2 - 2g(M).
\end{equation}

Given $\sigma \in S_n$, the \tdef{cycle type} of $\sigma$ is the partition whose parts consist of the length of each cycle in $\sigma$. We denote by $C(\lambda)$ the set of permutations with cycle type $\lambda \vdash n$, which is also the \tdef{conjugacy class} of $S_n$ associated to $\lambda$. Since $D_{n, \lambda}$ is the unique bipartite Eulerian digraph with $[n]$ and $\lambda$ as the degree profiles of black and white vertices respectively, rotation systems $(\phi_\circ, \sigma_\circ, \phi_\bullet, \sigma_\bullet)$ for bi-Eulerian maps in $\mathcal{M}(D_{n,\lambda})$ are exactly those with $\sigma_\bullet \in C([n])$ and $\sigma_\circ \in C(\lambda)$. Such rotation system is always transitive, as $\sigma_\bullet$ is an $n$-cycle. We thus have the following.

\begin{coro} \label{coro:bieulerian-rot}
  Bi-Eulerian maps in $\mathcal{M}(D_{n,\lambda})$ are in $\left( \left( \prod_{i=1}^{\ell(\lambda)} \lambda_i! \right)^2 \prod_{i \geq 1} m_i(\lambda)!\right)$-to-$n!$ correspondence with rotation systems $(\phi_\circ, \sigma_\circ, \phi_\bullet, \sigma_\bullet)$ with $\sigma_\bullet \in C([n])$ and $\sigma_\circ \in C(\lambda)$.
\end{coro}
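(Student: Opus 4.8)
The plan is to obtain Corollary~\ref{coro:bieulerian-rot} by specializing the correspondence of Proposition~\ref{prop:bieulerian-rot} to the bi-Eulerian maps whose underlying Eulerian digraph is $D_{n,\lambda}$, and then recomputing the counting multiplicity so that it reflects the specific edge-labeling of $D_{n,\lambda}$ rather than an arbitrary one.

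First I would translate the condition ``underlying digraph is $D_{n,\lambda}$'' into the language of rotation systems. In any rotation system $(\phi_\circ, \sigma_\circ, \phi_\bullet, \sigma_\bullet)$, the permutation $\sigma_\bullet$ records how the $(i,\bullet)$-edges cyclically surround the black vertices, so its cycles are in bijection with the black vertices, a black vertex of degree $2k$ contributing a $k$-cycle; the same holds for $\sigma_\circ$ and the white vertices. Hence requiring a single black vertex of degree $2n$ and white vertices of degrees $2\lambda_1,\dots,2\lambda_{\ell(\lambda)}$ is exactly requiring $\sigma_\bullet\in C([n])$ and $\sigma_\circ\in C(\lambda)$; moreover $D_{n,\lambda}$ is the only bipartite Eulerian digraph with this degree profile, since each white vertex must send all its out-edges to, and receive all its in-edges from, $v_\bullet$. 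Finally, such a $\sigma_\bullet$ is an $n$-cycle and therefore acts transitively on $\{1,\dots,n\}$, so the transitivity hypothesis of Proposition~\ref{prop:bieulerian-rot} is automatically met and can be dropped.

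It then remains to replace the factor $(2n)!$ in Proposition~\ref{prop:bieulerian-rot} --- the number of edge-labelings of a fixed underlying unlabeled map --- by the number of edge-labelings that actually realize $D_{n,\lambda}$. These admissible labelings form a single orbit under the automorphism group of the underlying unlabeled digraph, which has order $\bigl(\prod_{i=1}^{\ell(\lambda)}\lambda_i!\bigr)^2\prod_{i\ge1}m_i(\lambda)!$: a factor $\prod_i\lambda_i!$ permutes the out-edges at each white vertex within its label block, a second permutes the in-edges at each white vertex (equivalently, the out-edges of $v_\bullet$ pointing to that vertex), and $\prod_i m_i(\lambda)!$ interchanges white vertices of equal degree. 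Thus each underlying unlabeled map carries this many maps in $\mathcal{M}(D_{n,\lambda})$ and, as in Proposition~\ref{prop:bieulerian-rot}, $n!$ rotation systems (any automorphism of the underlying map contributes equally to both counts and cancels), which gives the claimed $\bigl(\bigl(\prod_i\lambda_i!\bigr)^2\prod_i m_i(\lambda)!\bigr)$-to-$n!$ correspondence.

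The delicate point I expect is precisely this last multiplicity computation: one must keep straight that $\mathcal{M}(D_{n,\lambda})$ is indexed by one fixed labeled digraph whereas Proposition~\ref{prop:bieulerian-rot} allows arbitrary labels, and verify that exactly $\bigl(\prod_i\lambda_i!\bigr)^2\prod_i m_i(\lambda)!$ of the $(2n)!$ labelings of a given underlying map realize $D_{n,\lambda}$ --- no fewer, since label blocks may be permuted internally and equal-degree white vertices swapped, and no more, since the block of labels attached to each vertex is fixed --- together with the bookkeeping that map-automorphism factors cancel between the two sides. The first two steps are immediate from the construction preceding Proposition~\ref{prop:bieulerian-rot}.
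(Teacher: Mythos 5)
Your proposal is correct and follows essentially the same route as the paper: specialize Proposition~\ref{prop:bieulerian-rot} to the degree profile $([n],\lambda)$ (transitivity being automatic since $\sigma_\bullet$ is an $n$-cycle) and adjust the multiplicity $(2n)!$ to account for fixing the labeling of $D_{n,\lambda}$. The only cosmetic difference is that you identify the correction factor $\bigl(\prod_i \lambda_i!\bigr)^2\prod_i m_i(\lambda)!$ as the order of the edge-automorphism group of the digraph, while the paper equivalently counts the $(2n)!/\bigl(\bigl(\prod_i\lambda_i!\bigr)^2\prod_i m_i(\lambda)!\bigr)$ admissible relabelings of $D_{n,\lambda}$.
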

\begin{proof}
  We first count the number of ways to relabel $D_{n,\lambda}$. For each white vertex of degree $d$, we choose $d$ labels for its out-going edges and $d$ labels for incoming ones. However, if there are $k$ vertices of the same degree, permuting their labels give the same labeling. Therefore, the total number of ways to relabel $D_{n,\lambda}$ is
  \[
    \frac{(2n)!}{\left( \prod_{i=1}^{\ell(\lambda)} \lambda_i! \right)^2 \prod_{i \geq 1} m_i(\lambda)!}.
  \]
  We thus obtain our claim by combining the two observations with Proposition~\ref{prop:bieulerian-rot}.
\end{proof}

From Corollay~\ref{prop:bieulerian-rot}, we have the following expression of $\Gamma_{n,\lambda}(t)$:
\begin{equation}
  \label{eq:gpoly-fact}
  \Gamma_{n,\lambda}(x) = \frac1{n!} \left(\prod_{i=1}^{\ell(\lambda)} \lambda_i!\right)^2 \cdot \left( \prod_{i \geq 1} m_i(\lambda)! \right) \cdot \sum_{\substack{\phi_\circ \sigma_\circ \phi_\bullet \sigma_\bullet = \id_{n} \\ \sigma_\bullet \in C([n]), \sigma_\circ \in C(\lambda)}} x^{n - \frac{1}{2}(\ell(\lambda) + \cyc(\phi_\circ) + \cyc(\phi_\bullet))}.
\end{equation}

Rotations systems of bi-Eulerian maps can be seen as factorizations of the identity into $4$ permutations. We now consider factorizations of the identity into an arbitrary number of permutations. For $m \geq 2$, let $\mathcal{M}_{m}(n, \lambda)$ be the set of factorizations $\phi \sigma_0 \sigma_1 \cdots \sigma_{m-1} = \id_n$ in $\mathbb{S}_n$ such that $\phi \in C(\lambda)$ and $\sigma_0 \in C([n])$. The genus of such a tuple $(\phi, \sigma_0, \ldots, \sigma_{m-1}) \in \mathcal{M}_m(n,\lambda)$ is defined by
\begin{equation} \label{eq:const-genus-def}
  g(\phi, \sigma_0, \ldots, \sigma_{m-1}) = 1 + \frac{1}{2} \left( (m-1)n - \ell(\lambda) - \sum_{i=0}^{m-1} \cyc(\sigma_i) \right).
\end{equation}
We note that $\mathcal{M}_{m}(n,\lambda)$ consists of rotation systems for $m$-constellations with one hyperface and the degrees of vertices of the first color given by $\lambda$ (\textit{cf.} \cite[Chapter~1]{LandoZvonkine}).

By writing $\id_n = \phi_\circ \sigma_\circ \phi_\bullet \sigma_\bullet = \sigma_\bullet \sigma_\circ \left(\sigma_\circ^{-1} \phi_\circ \sigma_\circ\right) \phi_\bullet$, we see that rotation systems for bi-Eulerian maps are in genus-preserving bijection with factorizations in $\mathcal{M}_{3}(n,\lambda)$. We define the genus polynomial of $\mathcal{M}_{m}(n,\lambda)$ to be
\begin{equation}
  \label{eq:genus-poly-const}
  \Gamma^{(m)}_{n,\lambda}(x) = \sum_{M \in \mathcal{M}_m(n,\lambda)} x^{g(M)}.
\end{equation}
By \eqref{eq:gpoly-fact}, we have
\begin{equation} \label{eq:gpoly-gamma}
  \Gamma_{n,\lambda}(x) = \frac1{n!} \left(\prod_{i=1}^{\ell(\lambda)} \lambda_i!\right)^2 \cdot \left( \prod_{i \geq 1} m_i(\lambda)! \right) \cdot \Gamma^{(3)}_{n,\lambda}(x).
\end{equation}

\begin{rmk}
Bi-Eulerian maps were already considered in \cite{unicursal}, which also contained an enumeration formula of planar bi-Eulerian maps. In \cite{bipartite-eulerian}, a bijection between planar bi-Eulerian maps and another family of maps called \emph{planar 3-constellations} was given, which actually works for all genera. Such a connection subsumed the enumeration formula in \cite{unicursal} under the framework of constellations, with the planar case solved in \cite{BMS} by Bousquet-Mélou and Schaeffer.
\end{rmk}

\section{Explicit formula for genus polynomial via characters} \label{sec:chara}

To simplify the notations, we define the polynomial $P^{(m)}_{n, \lambda}(t)$ by
\begin{equation}
  \label{eq:p-def}
  P^{(m)}_{n,\lambda}(t) = t^{(m-1)n+1} \Gamma^{(m)}_{n,\lambda}(t^{-2}).
\end{equation}
In the following, we will mainly work with $P^{(m)}_{n, \lambda}(t)$.

For $\theta, \mu$ two partitions of $n$, we denote by $\chi^\theta_\mu$ the irreducible character of $\mathbb{S}_n$ indexed by $\theta$ and evaluated at the conjugacy class $C(\mu)$. The following formula for the number of factorizations of the identity with given cycle type for each factor is often attributed to Frobenius:
\begin{equation} \label{eq:frobenius}
  \left| \{ \sigma_1 \cdots \sigma_k = \id_n \mid \forall i, \sigma_i \in C(\mu^{(i)}) \} \right| = \frac{1}{n!} \prod_{i=1}^{k} \left| C(\mu^{(i)}) \right| \sum_{\theta \vdash n} \left(f^{\theta}\right)^{2-k} \prod_{i=1}^{k} \chi^\theta_{\mu^{(i)}}.
\end{equation}
Here, $f^\theta = \chi^\theta_{[1^n]}$ is the dimension of the representation of $\mathbb{S}_n$ indexed by $\theta$. This formula can be proved using the orthogonal idempotent basis in the center of the group algebra of $\mathbb{S}_n$ (\textit{cf.} \cite{serre1977linear}).

To express $\Gamma^{(m)}_{n,\lambda}$ with \eqref{eq:frobenius}, we need the following lemma. The \tdef{Ferrers diagram} (with the \emph{French convention}) of a partition $\theta = (\theta_1, \ldots, \theta_k)$ is formed by $k$ left-aligned rows of unit squares called \tdef{cells}, with $\theta_i$ cells for the $i$-th rows from bottom up. The coordinate of the cell at the lower-left corner is $(0,0)$. The \tdef{content} of a cell $w$ at coordinate $(i,j)$ is $c(w) = i - j$.

\begin{lem}[Lemma~3.4 in \cite{JV1990a}, see also Lemma~1 in \cite{fang2014generalization}] \label{lem:poly-H}
  Given $\theta \vdash n$, we have
  \begin{equation} \label{eq:h-poly}
    \sum_{\mu \vdash n} \left| C(\mu) \right| \chi^\theta_\mu x^{\ell(\mu)} = f^\theta H_\theta(x).
  \end{equation}
  Here, $H_\theta(x) = \prod_{w} (x+c(w))$, with $w$ running over cells in the Ferrers diagram of $\theta$.
\end{lem}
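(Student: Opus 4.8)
The statement to prove is Lemma~\ref{lem:poly-H}: for $\theta \vdash n$, $\sum_{\mu \vdash n} |C(\mu)| \chi^\theta_\mu x^{\ell(\mu)} = f^\theta \prod_w (x + c(w))$. The plan is to recognize the left-hand side as a specialization of a symmetric function identity. Recall that for a fixed $\sigma$ with cycle type $\mu$, the product $\sum_{\sigma} x^{\cyc(\sigma)}$ over all $\sigma \in \mathbb{S}_n$ equals $x(x+1)\cdots(x+n-1)$, the rising factorial; this is the classical fact that the number of permutations of $n$ with $k$ cycles is the unsigned Stirling number $\left[{n \atop k}\right]$. The identity in the lemma is a ``character-weighted'' refinement of this: summing $\chi^\theta_\mu$ against the same $x^{\ell(\mu)}$ should pick out the content product $H_\theta(x)$ instead of the plain rising factorial, which is exactly what happens when $\theta = (n)$ (where $\chi^{(n)}_\mu = 1$ for all $\mu$ and $f^{(n)} = 1$, recovering the Stirling identity) and $\theta = (1^n)$ (where $\chi^{(1^n)}_\mu = \operatorname{sgn}$ and $H_{(1^n)}(x) = x(x-1)\cdots(x-n+1)$).

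First I would set up the right algebraic framework. Work in the center $Z(\mathbb{C}[\mathbb{S}_n])$ of the group algebra, with the conjugacy class sums $K_\mu = \sum_{\sigma \in C(\mu)} \sigma$ as one basis and the orthogonal idempotents $e_\theta = \frac{f^\theta}{n!} \sum_{\mu} \chi^\theta_\mu K_\mu$ as the other. The key object is the element $J = \sum_{\mu \vdash n} x^{\ell(\mu)} K_\mu$, or more usefully its close relative built from the Jucys–Murphy elements $X_i = (1\,i) + (2\,i) + \cdots + (i-1\,i)$. The crucial known fact (Jucys) is that the elementary symmetric polynomials in $X_1, \ldots, X_n$ are precisely the class sums $K_\mu$ weighted so that $\prod_{i=1}^n (x + X_i) = \sum_{k} x^{n-k} \big(\text{sum of }K_\mu\text{ over }\mu\text{ with }n-\ell(\mu) = k\big)$ — equivalently $\prod_i (x + X_i) = \sum_\mu x^{\ell(\mu)} K_\mu$ after re-indexing, since adding a transposition decreases the cycle count by one. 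So $J = \prod_{i=1}^n (x + X_i)$ as an identity in $Z(\mathbb{C}[\mathbb{S}_n])$ (one must check $J$ is central, which follows because the symmetric functions of the $X_i$ are central even though the individual $X_i$ are not).

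Next I would apply the irreducible representation $\rho_\theta$ to both sides. On the left, $\rho_\theta(K_\mu)$ acts as the scalar $\frac{|C(\mu)| \chi^\theta_\mu}{f^\theta}$, so $\rho_\theta(J)$ is the scalar $\frac{1}{f^\theta}\sum_\mu |C(\mu)| \chi^\theta_\mu x^{\ell(\mu)}$ — exactly $H_\theta(x)/1$ times what we want, up to the $f^\theta$ factor we are chasing. On the right, I would invoke the second key known fact: the Jucys–Murphy elements act diagonally on the Young/Gelfand–Tsetlin basis of the irreducible $\theta$-module, with $X_i$ having eigenvalue $c(w)$ where $w$ is the cell containing $i$ in a standard Young tableau of shape $\theta$; hence $\prod_i (x + X_i)$ acts as the scalar $\prod_{w} (x + c(w)) = H_\theta(x)$ on \emph{every} basis vector (the product over all $i$ sweeps over all cells regardless of which tableau we chose). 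Equating the two scalar evaluations of $\rho_\theta(J)$ gives $\frac{1}{f^\theta} \sum_\mu |C(\mu)| \chi^\theta_\mu x^{\ell(\mu)} = H_\theta(x)$, which is the claim after clearing $f^\theta$.

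The main obstacle is that this proof leans on two nontrivial structural inputs about Jucys–Murphy elements — their expansion into class sums via elementary symmetric functions, and their eigenvalues being cell contents — so if the paper wants a self-contained argument one would instead cite \cite{JV1990a} or \cite{fang2014generalization} directly, as the lemma statement already does. If a direct proof is preferred, the alternative is an induction on $n$: expand $\chi^\theta_\mu$ using the Murnaghan–Nakayama rule on a part of size $1$ (a fixed point), relating $\chi^\theta_{\mu}$ for $\mu$ with a part equal to $1$ to characters of $\mathbb{S}_{n-1}$ on $\theta$ minus a corner cell, then match the recursion $H_\theta(x) = \sum_{\theta^- } (\text{corner content term}) H_{\theta^-}(x)$ — but bookkeeping the branching of \emph{all} of $x^{\ell(\mu)}$ (not just the fixed-point structure) makes this messier than the idempotent/Jucys–Murphy route, so I would present the latter.
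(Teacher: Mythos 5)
Your argument is correct. Note, however, that the paper does not prove Lemma~\ref{lem:poly-H} at all: it is quoted from Lemma~3.4 of \cite{JV1990a}, with a pointer to a standalone group-algebra proof in \cite{fang2014generalization}. Your Jucys--Murphy route is a valid self-contained derivation of exactly the kind the paper alludes to: the identity $\prod_{i=1}^n (x+X_i) = \sum_{\mu \vdash n} x^{\ell(\mu)} K_\mu$ in the (polynomial extension of the) center of $\mathbb{C}[\mathbb{S}_n]$, the scalar action $\rho_\theta(K_\mu) = \frac{|C(\mu)|\chi^\theta_\mu}{f^\theta}\,\mathrm{id}$, and the content eigenvalues of the $X_i$ on the Gelfand--Tsetlin basis together give $\frac{1}{f^\theta}\sum_\mu |C(\mu)|\chi^\theta_\mu x^{\ell(\mu)} = H_\theta(x)$, which is the claim; your sanity checks at $\theta=(n)$ and $\theta=(1^n)$ and your remark that the product over all $i$ sweeps all cells independently of the chosen tableau are the right points to verify. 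The trade-off is that you import two nontrivial (though classical) structural facts about Jucys--Murphy elements, which you acknowledge. A slightly more elementary alternative, closer in spirit to symmetric-function arguments, is to evaluate the expansion $s_\theta = \frac{1}{n!}\sum_\mu |C(\mu)|\chi^\theta_\mu p_\mu$ at $N$ variables equal to $1$: since $p_\mu(1^N)=N^{\ell(\mu)}$, $s_\theta(1^N) = \prod_w \frac{N+c(w)}{h(w)}$ and $f^\theta = n!/\prod_w h(w)$, the identity holds at every positive integer $N$ and hence as polynomials. Either way, what you wrote would serve as a correct proof; relative to the paper, you are supplying a proof where the authors chose to cite one.
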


A standalone proof using the group algebra of $\mathbb{S}_n$ can be found in \cite{fang2014generalization}. We would also like to mention some special character values in $\mathbb{S}_n$. For $0 \leq a \leq n-1$, we denote by $\hook(n,a)$ the \tdef{hook partition} $[n-a, 1^a]$. For $a=n-1$, we have $\hook(n,n-1) = [1^n]$. The following result is a well-known direct consequence of the Murnaghan-Nakayama rule (\textit{cf.} \cite[Chapter~7.17]{Stanley:EC2}.

\begin{prop} \label{prop:mn-cycle}
  For any $\theta \vdash n$, we have $\chi^\theta_{[n]} = 0$ unless $\theta = \hook(n,a)$ for some $a$, and we have $\chi^{\hook(n,a)}_{[n]} = (-1)^{a}$.
\end{prop}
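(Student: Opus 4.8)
The plan is to apply the Murnaghan-Nakayama rule to compute $\chi^\theta_{[n]}$, where $[n]$ is the single $n$-cycle. Recall that the Murnaghan-Nakayama rule computes an irreducible character value $\chi^\theta_\mu$ by summing signs over all ways of successively removing border strips (rim hooks) from the Ferrers diagram of $\theta$, where the sizes of the border strips removed are the parts of $\mu$ in some fixed order, and the sign of each removal is $(-1)^{\mathrm{ht}}$ with $\mathrm{ht}$ the number of rows spanned by the strip minus one.

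The key observation is that for $\mu = [n]$, there is exactly one part, of size $n$, so the rule reduces to a single step: $\chi^\theta_{[n]}$ is a sum over all ways of removing a border strip of size $n$ (i.e., covering the entire diagram) from $\theta$, leaving the empty diagram. Such a removal is possible if and only if the whole Ferrers diagram of $\theta$ is itself a single border strip, which happens exactly when $\theta$ contains no $2 \times 2$ block of cells, i.e., when $\theta = \hook(n,a) = [n-a,1^a]$ is a hook. In all other cases there is no valid removal and $\chi^\theta_{[n]} = 0$. When $\theta = \hook(n,a)$, the unique border strip is the whole diagram, which spans $a+1$ rows (one row of length $n-a$ plus $a$ rows of length $1$), so its height is $a$ and the sign contributed is $(-1)^a$. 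Hence $\chi^{\hook(n,a)}_{[n]} = (-1)^a$.

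The only point requiring a little care is the claim that a connected skew shape (here the whole straight shape $\theta$) is a border strip precisely when it contains no $2\times 2$ square of cells; this is the standard characterization of border strips and can be cited from \cite[Chapter~7.17]{Stanley:EC2} along with the statement of the Murnaghan-Nakayama rule itself. I do not anticipate any genuine obstacle here: the result is classical and the argument is essentially a one-line specialization of Murnaghan-Nakayama, so in the write-up I would simply state the rule, specialize to a single $n$-cycle, identify the hooks as the only shapes that are border strips of full size, and read off the sign from the number of rows.
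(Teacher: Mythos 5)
Your argument is correct and is exactly the route the paper takes: the paper simply states the proposition as a direct consequence of the Murnaghan--Nakayama rule with a citation to \cite[Chapter~7.17]{Stanley:EC2}, and your write-up just spells out that one-step specialization (a single border strip of size $n$ forces $\theta$ to be a hook $\hook(n,a)$, contributing sign $(-1)^a$). No gap; your proof is a fleshed-out version of the paper's cited argument.
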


Combining the ingredients above, we have the following expression of $P^{(m)}_{n,\lambda}(t)$.

\begin{thm} \label{thm:p-expr}
  For $m \geq 2$, $n \geq 1$ and $\lambda \vdash n$, we have
  \[
    P^{(m)}_{n,\lambda}(t) = \frac{\left| C(\lambda) \right|}{n} t^{\ell(\lambda)} \sum_{a=0}^{n-1} (-1)^{a} H_{n,a}(t)^{m-1} \chi^{\hook(n,a)}_\lambda.
  \]
  Here, $H_{n,a}(t) = H_{\hook(n,a)}(t) = \prod_{k=-a}^{n-a-1} (t+k)$.
\end{thm}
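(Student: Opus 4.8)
The plan is to combine the Frobenius formula \eqref{eq:frobenius} with Lemma~\ref{lem:poly-H} and Proposition~\ref{prop:mn-cycle}, using the definition \eqref{eq:p-def} of $P^{(m)}_{n,\lambda}$ to bookkeep the exponents. First I would write out $\Gamma^{(m)}_{n,\lambda}(x)$ as a sum over $\mathcal{M}_m(n,\lambda)$, i.e., over factorizations $\phi\sigma_0\sigma_1\cdots\sigma_{m-1}=\id_n$ with $\phi\in C(\lambda)$ and $\sigma_0\in C([n])$, weighted by $x^{g}$ with $g$ given by \eqref{eq:const-genus-def}. The cycle types of $\sigma_1,\dots,\sigma_{m-1}$ are free, so I would apply \eqref{eq:frobenius} with $k=m+1$ factors, keeping the cycle types $\lambda$ (for $\phi$) and $[n]$ (for $\sigma_0$) fixed while summing over the cycle types $\mu^{(1)},\dots,\mu^{(m-1)}$ of the remaining factors, each weighted to track $\sum_i \cyc(\sigma_i)$ via a variable. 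Concretely, since $g$ depends on $\sum_{i=1}^{m-1}\cyc(\sigma_i)$ linearly, I would introduce the substitution that turns $x^{g}$ into a product of $m-1$ copies of a generating variable raised to $\ell(\mu^{(i)})$, so that the sum over each $\mu^{(i)}$ becomes exactly the left-hand side of \eqref{eq:h-poly}.

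The key computation is then: after applying \eqref{eq:frobenius}, the expression factors over $\theta\vdash n$ as
\[
  \frac{1}{n!}\,|C(\lambda)|\,|C([n])|\sum_{\theta\vdash n}(f^\theta)^{1-m}\,\chi^\theta_\lambda\,\chi^\theta_{[n]}\prod_{i=1}^{m-1}\Bigl(\sum_{\mu^{(i)}\vdash n}|C(\mu^{(i)})|\,\chi^\theta_{\mu^{(i)}}\,y^{\ell(\mu^{(i)})}\Bigr),
\]
for the appropriate generating variable $y$. By Lemma~\ref{lem:poly-H}, each inner sum equals $f^\theta H_\theta(y)$, so the $\theta$-summand becomes $(f^\theta)^{1-m}\chi^\theta_\lambda\chi^\theta_{[n]}\cdot(f^\theta H_\theta(y))^{m-1}=\chi^\theta_\lambda\chi^\theta_{[n]}H_\theta(y)^{m-1}$. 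Now Proposition~\ref{prop:mn-cycle} kills all $\theta$ except the hooks $\hook(n,a)$, where $\chi^{\hook(n,a)}_{[n]}=(-1)^a$ and $H_{\hook(n,a)}(y)=\prod_{k=-a}^{n-a-1}(y+k)=H_{n,a}(y)$. Using $|C([n])|=(n-1)!$, the prefactor $\frac{1}{n!}|C(\lambda)|(n-1)!$ simplifies to $|C(\lambda)|/n$, giving $\sum_{a=0}^{n-1}(-1)^a H_{n,a}(y)^{m-1}\chi^{\hook(n,a)}_\lambda$ up to the variable change. The remaining task is to check that the substitution $x\mapsto$ (something) together with the overall power $t^{(m-1)n+1}$ in \eqref{eq:p-def} converts $y$ into $t$ and produces the stated prefactor $t^{\ell(\lambda)}$; tracking this requires matching $H_{n,a}$'s degree $n$ against the exponent $(m-1)n+1$ and the $-\tfrac12$ and $+1$ constants in \eqref{eq:const-genus-def}.

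I expect the only real obstacle to be the exponent bookkeeping: one must verify that setting $x=t^{-2}$ in $\Gamma^{(m)}_{n,\lambda}$ and multiplying by $t^{(m-1)n+1}$ exactly matches, degree by degree, the substitution $y\mapsto$ the relevant power of $t$ that turns $\sum_{\mu^{(i)}}|C(\mu^{(i)})|\chi^\theta_{\mu^{(i)}}y^{\ell(\mu^{(i)})}$ into the homogenized form $t^{?}\cdot H_\theta(t)^{m-1}$ appearing in the claim. The genus formula \eqref{eq:const-genus-def} contributes a $\tfrac12((m-1)n-\ell(\lambda))$ constant and a $-\tfrac12\sum\cyc(\sigma_i)$ term; since $\sum_{i=1}^{m-1}\ell(\mu^{(i)})$ and the genus variable are linearly related, the factor $t^{(m-1)n+1}$ distributes as $t^{1-\ell(\lambda)}$ times $m-1$ factors of $t^{n}$, and each $t^n H_\theta(t^{-2}\cdot\text{stuff})$ must be rewritten as $H_\theta(t)$ evaluated at the shifted argument. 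Once the degree $n$ of $H_{n,a}$ is used to absorb each $t^n$ and the residual power is seen to be $t^{\ell(\lambda)}$ (not $t^{1-\ell(\lambda)}$ — a sign/parity check on $n-\ell(\lambda)$ being even or the telescoping of constants), the formula falls out. I would organize this as: (1) expand $\Gamma^{(m)}_{n,\lambda}$; (2) apply Frobenius; (3) apply Lemma~\ref{lem:poly-H} to collapse the $m-1$ free factors; (4) apply Proposition~\ref{prop:mn-cycle} to restrict to hooks; (5) simplify the constant $\tfrac{1}{n!}|C(\lambda)||C([n])|=|C(\lambda)|/n$; (6) perform the variable change $x=t^{-2}$ with the prefactor $t^{(m-1)n+1}$ and confirm the clean form with $t^{\ell(\lambda)}$.
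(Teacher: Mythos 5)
Your proposal follows exactly the paper's proof: apply the Frobenius formula \eqref{eq:frobenius} with the cycle types $\lambda$ and $[n]$ fixed, collapse the $m-1$ free inner sums with Lemma~\ref{lem:poly-H}, restrict to hooks via Proposition~\ref{prop:mn-cycle}, and simplify $\frac{1}{n!}|C(\lambda)|\,|C([n])| = |C(\lambda)|/n$. The exponent bookkeeping you flag as the remaining obstacle is immediate and requires none of the $H_\theta$-rewriting or parity checks you anticipate: by \eqref{eq:const-genus-def} and \eqref{eq:p-def}, each factorization contributes $t^{(m-1)n+1}(t^{-2})^{g} = t^{(m-1)n+1-2g} = t^{\ell(\lambda)-1+\sum_{i=0}^{m-1}\cyc(\sigma_i)} = t^{\ell(\lambda)+\sum_{i=1}^{m-1}\ell(\mu^{(i)})}$ since $\cyc(\sigma_0)=1$, so your generating variable $y$ is simply $t$ and the prefactor $t^{\ell(\lambda)}$ falls out directly.
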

\begin{proof}
  By the definitions \eqref{eq:const-genus-def}, \eqref{eq:genus-poly-const} and \eqref{eq:p-def}, we have
  \[
    P^{(m)}_{n,\lambda}(t) = \frac{\left| C(\lambda) \right|}{n} \sum_{\mu^{(1)}, \ldots, \mu^{(m-1)} \vdash n} \prod_{i=1}^{m-1} \left| C(\mu^{(i)}) \right| \sum_{\theta \vdash n} \left(f^{\theta}\right)^{1-m} \chi^\theta_{[n]} \chi^\theta_\lambda \prod_{i=1}^{m-1} \chi^\theta_{\mu^{(i)}} t^{\ell(\lambda) + \sum_{i=1}^{m-1} \ell(\mu^{(i)})}.
  \]
  Then by Lemma~\ref{lem:poly-H}, we have
  \[
    P^{(m)}_{n,\lambda}(t) = \frac{\left| C(\lambda) \right|}{n} t^{\ell(\lambda)} \sum_{\theta \vdash n} H_\theta(t)^{m-1} \chi^\theta_{[n]} \chi^\theta_\lambda.
  \]
  Combining with Proposition~\ref{prop:mn-cycle} and the definition of $H_\theta(t)$ in Lemma~\ref{lem:poly-H}, we establish our claim.
\end{proof}

\begin{rmk}
  Factorization of the identity in $\mathbb{S}_n$ has been studied abundantly in the literature. In the jargon of map enumeration, such factorizations into $m+1$ permutations correspond to \emph{$m$-constellations} (\textit{cf.} \cite[Chapter~1]{LandoZvonkine}). The case where one of the permutation is the $n$-cycle (the \emph{one-face} case in terms of maps) is particularly well-studied. Jackson first gave in \cite{jackson-factorization} an explicit formula for such factorizations with control on the cycle types of all factors, using characters of $\mathbb{S}_n$. Poulalhon and Schaeffer then gave in \cite{poulalhon2002factorizations} a formula without cancellation using bijective methods, while also giving some asymptotic enumeration results. Bernardi and Morales also studied in \cite{bernardi-morales} symmetries in such factorizations using bijections.
\end{rmk}

Using Theorem~\ref{thm:p-expr}, we can compute the genus polynomials for some special digraphs. The \tdef{directed bouquet} of order $n$, denote by $B_n$, is the digraph formed by one single vertex and $n$ loops. We now compute the genus distribution of $B_n$.

\begin{coro} \label{coro:bouquet}
  The genus polynomial $\Gamma_{B_n}(x)$ of the directed bouquet $B_n$ is
  \[
    \Gamma_{B_n}(x) = \frac1{n} x^{(n+1)/2} \sum_{a=0}^{n-1} (-1)^a \binom{n-1}{a} H_{n,a}(x^{-1/2})^{m-1}.
  \]
\end{coro}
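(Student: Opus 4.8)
The plan is to recognize $B_n$, after subdividing each loop once, as the bipartite Eulerian digraph $D_{n,[1^n]}$, and then to specialize Theorem~\ref{thm:p-expr} at $\lambda=[1^n]$ and $m=3$. First I would check the geometric reduction: placing a single new vertex in the interior of each of the $n$ directed loops of $B_n$ turns the single vertex of $B_n$ into a black vertex of degree $2n$ and creates $n$ white vertices of in- and out-degree $1$, i.e.\ produces $D_{n,[1^n]}$ (after an evident identification of edge labels). Subdividing an edge changes neither the set of faces of an embedding nor, by Euler's relation~\eqref{eq:euler}, its genus, and it plainly preserves the face-oriented property, since the two oriented halves of a loop flow in the same direction; the inverse operation is the canonical smoothing of a degree-$2$ vertex. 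Hence subdivision is a genus-preserving bijection between the face-oriented embeddings of $B_n$ and those of $D_{n,[1^n]}$, and since relabeling edges induces a genus-preserving bijection on face-oriented embeddings, $\Gamma_{B_n}(x)=\Gamma_{D_{n,[1^n]}}(x)=\Gamma_{n,[1^n]}(x)$.

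Next I would specialize~\eqref{eq:gpoly-gamma} to $\lambda=[1^n]$. Since every part equals $1$ we have $\prod_i\lambda_i!=1$, and since $m_1([1^n])=n$ we have $\prod_i m_i(\lambda)!=n!$, so the multiplicative constant is $\frac1{n!}\cdot1\cdot n!=1$ and $\Gamma_{n,[1^n]}(x)=\Gamma^{(3)}_{n,[1^n]}(x)$.

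Then I would apply Theorem~\ref{thm:p-expr} with $m=3$, $\lambda=[1^n]$. Here $|C([1^n])|=1$, $\ell([1^n])=n$, and, because $[1^n]$ is the conjugacy class of the identity, $\chi^{\hook(n,a)}_{[1^n]}=f^{\hook(n,a)}$; the dimension of the hook representation $[n-a,1^a]$ is $\binom{n-1}{a}$. This yields $P^{(3)}_{n,[1^n]}(t)=\frac1n\,t^{n}\sum_{a=0}^{n-1}(-1)^{a}\binom{n-1}{a}H_{n,a}(t)^{2}$. Finally I would invert the substitution~\eqref{eq:p-def}: from $P^{(3)}_{n,[1^n]}(t)=t^{2n+1}\Gamma^{(3)}_{n,[1^n]}(t^{-2})$, taking $t=x^{-1/2}$ gives $\Gamma^{(3)}_{n,[1^n]}(x)=x^{(2n+1)/2}P^{(3)}_{n,[1^n]}(x^{-1/2})=\frac1n\,x^{(2n+1)/2-n/2}\sum_{a=0}^{n-1}(-1)^{a}\binom{n-1}{a}H_{n,a}(x^{-1/2})^{2}=\frac1n\,x^{(n+1)/2}\sum_{a=0}^{n-1}(-1)^{a}\binom{n-1}{a}H_{n,a}(x^{-1/2})^{2}$, which is the asserted formula (with the exponent $m-1$ in the displayed statement read as $2$, the case $m=3$).

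I expect the only genuinely delicate point to be the first step — a clean, rigorous check that loop-subdivision is a genus-preserving bijection on face-oriented embeddings matching the labeling conventions of $D_{n,[1^n]}$; everything afterwards is a mechanical specialization. A self-contained alternative that avoids the detour through $D_{n,[1^n]}$ would be to construct a rotation system for $B_n$ directly, in the spirit of Proposition~\ref{prop:bieulerian-rot}, using that incoming and outgoing half-edges alternate around each vertex of a face-oriented directed map, so as to identify the face-oriented embeddings of $B_n$ with the factorizations in $\mathcal{M}_3(n,[1^n])$; the bookkeeping is of comparable length.
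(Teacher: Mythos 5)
Your proposal is correct and follows essentially the same route as the paper: subdivide each loop to identify $\Gamma_{B_n}$ with $\Gamma_{n,[1^n]}$, note that the prefactor in \eqref{eq:gpoly-gamma} equals $1$ for $\lambda=[1^n]$, and then specialize Theorem~\ref{thm:p-expr} with $m=3$ and $\chi^{\hook(n,a)}_{[1^n]}=\binom{n-1}{a}$, undoing the substitution \eqref{eq:p-def}. You also correctly read the exponent $m-1$ in the displayed formula as $2$ (the $m=3$ case), which is the intended interpretation.
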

\begin{proof}
  By adding a white vertex in the middle of each loop in $B_n$, we obtain the digraph $D_{n,[1^n]}$. This reversible transformation can be done on every face-oriented embedding of $B_n$ without changing the genus, thus $\Gamma_{B_n}(x) = \Gamma_{n, [1^n]}(x)$. We conclude by combining Theorem~\ref{thm:p-expr} with \eqref{eq:gpoly-gamma}, \eqref{eq:p-def}, and the fact that $\chi^{\hook(n,a)}_{[1^n]} = \binom{n-1}{a}$.
\end{proof}

We note that, although there seems to be $1/n$ and fractional powers of $x$ appearing in Corollary~\ref{coro:bouquet}, $\Gamma_{B_n}(x)$ is still a polynomial in $x$ with integral coefficients due to cancellations. Table~\ref{tab:bouquet} lists the genus polynomials of $B_n$ for $1 \leq n \leq 8$.

\begin{table}[!thb]
  \centering
  \begin{tabular}{cl}
    \toprule
    $n$ & $\Gamma_{B_n}(x)$ \\
    \midrule
    $1$ & $1$\\
    $2$ & $2$\\
    $3$ & $10 + 2 x$\\
    $4$ & $84 + 60 x$\\
    $5$ & $1008 + 1680 x + 192 x^2$\\
    $6$ & $15840 + 50400 x + 20160 x^2$ \\
    $7$ & $308880 + 1663200 x + 1527120 x^2 + 129600 x^3$ \\
    $8$ & $7207200 + 60540480 x + 104781600 x^2 + 30683520 x^3$ \\
    \bottomrule
  \end{tabular}
  \caption{Genus polynomials for the directed bouquets $B_{n}$, $1 \leq n \leq 8$} \label{tab:bouquet}
\end{table}

Another special case we consider here is the \tdef{directed dipole} $D_{n,[n]}$ of order $n$, which is the Eulerian digraph with two vertices and $2n$ edges. We now compute the genus polynomial of $D_{n,[n]}$.

\begin{coro} \label{coro:dipole}
  The genus polynomial $\Gamma_{n,[n]}(x)$ of the directed dipole $D_{n,[n]}$ is
  \[
    \Gamma_{n,[n]}(x) = \left( (n-1)! \right)^2 x^n \sum_{a=0}^{n-1} H_{n,a}(x^{-1/2})^2.
  \]
\end{coro}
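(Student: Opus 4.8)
The plan is to read off the statement by specializing Theorem~\ref{thm:p-expr} to $m=3$ and $\lambda=[n]$, then unwinding the two definitions \eqref{eq:p-def} and \eqref{eq:gpoly-gamma}. First I would collect the combinatorial data attached to $\lambda=[n]$: the partition has length $\ell([n])=1$; the conjugacy class $C([n])$ of $n$-cycles has size $|C([n])|=(n-1)!$; and in the prefactor of \eqref{eq:gpoly-gamma} we have $\prod_{i=1}^{\ell(\lambda)}\lambda_i! = n!$ and $\prod_{i\geq 1} m_i([n])! = 1$ (there is a single part, of size $n$, so $m_n([n])=1$). Recall also that $D_{n,[n]}$ indeed is the claimed two-vertex digraph: a black vertex of degree $2n$ and one white vertex of degree $2n$.

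Second, I would evaluate the character occurring in Theorem~\ref{thm:p-expr}. By Proposition~\ref{prop:mn-cycle} we have $\chi^{\hook(n,a)}_{[n]} = (-1)^a$. Substituting $m=3$ and $\lambda=[n]$ into Theorem~\ref{thm:p-expr}, the sign $(-1)^a$ appearing there multiplies the character value $(-1)^a$, so the two signs cancel and every summand becomes a square:
\[
  P^{(3)}_{n,[n]}(t) = \frac{(n-1)!}{n}\, t \sum_{a=0}^{n-1} H_{n,a}(t)^2 .
\]
This collapse of signs is the one genuinely pleasant point of the computation, and is precisely what makes the coefficients of the final genus polynomial manifestly non-negative.

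Third, I would invert the normalization. From \eqref{eq:p-def} with $m=3$ we have $P^{(3)}_{n,[n]}(t) = t^{2n+1}\,\Gamma^{(3)}_{n,[n]}(t^{-2})$, so setting $t = x^{-1/2}$ gives
\[
  \Gamma^{(3)}_{n,[n]}(x) = \frac{(n-1)!}{n}\, x^{n} \sum_{a=0}^{n-1} H_{n,a}(x^{-1/2})^2 .
\]
Finally, plugging $\lambda=[n]$ into \eqref{eq:gpoly-gamma} yields $\Gamma_{n,[n]}(x) = \tfrac{1}{n!}(n!)^2\cdot 1 \cdot \Gamma^{(3)}_{n,[n]}(x) = n!\,\Gamma^{(3)}_{n,[n]}(x)$, and since $n!\cdot\frac{(n-1)!}{n} = ((n-1)!)^2$ we obtain the asserted identity. (As in Corollary~\ref{coro:bouquet}, the apparent $x^{-1/2}$ and the division by $n$ disappear after the cancellations, leaving a genuine polynomial with integer coefficients.) There is no real obstacle here: every step is a direct substitution into results already established, so the only point requiring care is the bookkeeping of the power of $t$ through \eqref{eq:p-def} and the verification that the scalar factors $\tfrac{1}{n!}$, $(n!)^2$, $\prod m_i([n])!=1$ and $\tfrac{|C([n])|}{n}=\tfrac{(n-1)!}{n}$ multiply to $((n-1)!)^2$.
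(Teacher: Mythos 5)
Your proposal is correct and follows exactly the paper's route: the paper proves this corollary by the same computation as Corollary~\ref{coro:bouquet}, i.e., specializing Theorem~\ref{thm:p-expr} (with $m=3$, $\lambda=[n]$, $\chi^{\hook(n,a)}_{[n]}=(-1)^a$ so the signs cancel) and unwinding \eqref{eq:p-def} and \eqref{eq:gpoly-gamma}, which is precisely your substitution and bookkeeping of the constants $\tfrac{|C([n])|}{n}$, $\tfrac{1}{n!}$, and $(n!)^2$. You have simply written out explicitly what the paper leaves implicit.
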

\begin{proof}
  The computation is the same as that of Corollary~\ref{coro:bouquet}, except that we use $\chi^{\hook(n,a)}_{[n]} = (-1)^a$ in Proposition~\ref{prop:mn-cycle} here.
\end{proof}

Using Corollary~\ref{coro:dipole}, we list the genus polynomials of the directed dipole $D_{n,[n]}$ for $1 \leq n \leq 6$ in Table~\ref{tab:dipole}.

\begin{table}[!thb]
  \centering
  \begin{tabular}{cl}
    \toprule
    $n$ & $\Gamma_{n,[n]}(x)$ \\
    \midrule
    $1$ & $1$ \\
    $2$ & $2 + 2x$ \\
    $3$ & $12 + 96x + 36x^2$ \\
    $4$ & $144 + 4320x + 13392x^2 + 2880x^3$ \\
    $5$ & $2880 + 230400x + 2594880x^2 + 4752000x^3 + 714240x^4$ \\
    $6$ & $86400 + 15120000x + 440899200x^2 + 2867184000x^3 + 3706214400x^4 + 435456000x^5$ \\
    \bottomrule
  \end{tabular}
  \caption{Genus polynomials for the directed dipoles $D_{n,[n]}$, $1 \leq n \leq 6$}
  \label{tab:dipole}
\end{table}

\section{Backward shift operator and real-rootedness} \label{sec:realrooted}

To prove the real-rootedness of $\Gamma^{(m)}_{n,\lambda}$, we follow the steps in \cite{stanley-factorization}. Exceptionally in this section, the symbol $i$ stands for the imaginary unity instead of an integer. Let $\mathbf{T}$ be the \tdef{backward shift operator} on the polynomial space $\mathbb{C}[t]$ defined by $\mathbf{T}Q(t) = Q(t-1)$. We denote by $(n)_k$ the falling factorial $n(n-1)\cdots(n-k+1)$. We have the following minor generalization of Theorem~3.2 in \cite{stanley-factorization}.

\begin{prop} \label{prop:shift}
  Suppose that $F \in \mathbb{C}[z] \setminus \{0\}$ is of degree $d$ and with all zeros of modulus $1$. Let $c$ be the multiplicity of $1$ as a root of $F(z)$. For any given $k \in \mathbb{N}_+$, we take $P(t) = F(\mathbf{T}) (t+n-1)_n^k$.
  \begin{itemize}
  \item If $d \leq n-1$, then $P(t) = (t+n-d-1)_{n-d}^k Q(t)$ with $Q(t)$ a polynomial of degree $kd-c$ whose roots all have the real part $(d-n+1)/2$;
  \item If $d \geq n-1$, then $P(t)$ is a polynomial of degree $kn-c$ whose roots all have the real part $(d-n+1)/2$.
  \end{itemize}
\end{prop}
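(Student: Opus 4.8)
The plan is to follow the strategy of Theorem~3.2 in \cite{stanley-factorization}, tracking how the backward shift operator $\mathbf{T}$ acts on the rising-type factorial $(t+n-1)_n^k = \big((t+n-1)(t+n-2)\cdots t\big)^k$. First I would observe that the single factor $g(t) = (t+n-1)_n$ satisfies the key functional equation $g(t-1) = \frac{t-1}{t+n-1}\, g(t)$, so that for a monomial $\mathbf{T}^j$ applied to $g(t)^k$ one gets $g(t)^k$ times a rational function whose numerator and denominator are explicit shifted falling factorials. The cleanest route is to write $F(z) = \prod_{r}(z-\zeta_r)$ with $|\zeta_r|=1$, and note $F(\mathbf{T})g(t)^k = \prod_r(\mathbf{T}-\zeta_r)g(t)^k$ only in a formal sense; instead I would expand $F(\mathbf{T}) = \sum_{j=0}^d f_j \mathbf{T}^j$ and compute $\mathbf{T}^j g(t)^k = g(t-j)^k$ directly. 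The point is that all the $g(t-j)^k$ for $0\le j\le d$ share the common factor corresponding to the cells that never leave the "window'', which accounts for the factor $(t+n-d-1)_{n-d}^k$ in the case $d\le n-1$; after pulling this out one is left with $Q(t) = \sum_j f_j \cdot (\text{product of } kd \text{ linear factors})$, a polynomial of degree at most $kd$.

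The second and main step is to pin down the roots of $Q$. Here I would use the substitution that symmetrizes the window: set $t = s + \frac{d-n+1}{2}$, so that the claim "all roots have real part $(d-n+1)/2$'' becomes "all roots of the transformed polynomial are purely imaginary''. Under this shift the common factor and the residual products become palindromic/anti-palindromic in $s$ up to sign, and one can recognize $Q(s)$ (in the case $d\le n-1$) as, up to a nonzero constant and a power of a linear factor, equal to $F$ evaluated along a Möbius-type transformation of the form $s\mapsto \frac{s+\alpha}{s-\alpha}$ or $\prod \frac{s - a_\ell}{s + a_\ell}$ that carries the imaginary axis to the unit circle. Concretely, the ratio $g(t-j)^k / \big((t+n-d-1)_{n-d}^k \cdot (\text{base product})\big)$ should simplify, after the symmetrizing shift, to a power of a Blaschke-type product in $s$; since $F$ has all its zeros on $|z|=1$, the equation $F(\text{Blaschke}(s)) = 0$ forces $s$ on the imaginary axis. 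The multiplicity bookkeeping: the $z=1$ preimage under the Blaschke map is $s=\infty$, which is why the root $c$ of $F$ at $z=1$ lowers the degree of $Q$ from $kd$ to $kd-c$; similarly in the case $d\ge n-1$ the degree is $kn-c$ because the falling-factorial part saturates at $kn$ factors.

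For the case $d\ge n-1$ the argument is the same but no common factor survives (the window is fully swept), so $P$ itself plays the role of $Q$; I would just verify the degree count $kn - c$ by checking the leading coefficient is $F(1)\cdot(\text{const})$ up to the first $c$ vanishing terms, i.e. the top $c$ coefficients cancel precisely because $(1-z)^c \mid F(z)$. Both cases then conclude by the root-location statement established above.

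The hard part will be making the "Blaschke product'' identification completely rigorous — that is, verifying that after the symmetrizing shift $t = s + (d-n+1)/2$ the explicit ratio of shifted falling factorials is \emph{exactly} a power $\big(B(s)\big)^{\text{something}}$ of a single rational function mapping $i\mathbb{R}$ onto the unit circle, with the correct exponent so that $F(\mathbf{T})(t+n-1)_n^k$ becomes $(\text{factor})\cdot F(B(s))$ up to units. Getting the exponents and the single-vs-multiple-Blaschke-factor structure right (it will be a product of $k$ copies, or a $k$-th power, of the basic degree-one Blaschke map, depending on how the $k$ identical factors of $g$ interact) is the delicate combinatorial-algebraic bookkeeping; once that identity is in hand, the real-part conclusion and the degree/multiplicity counts follow immediately from $|z|=1 \iff z = B(s), s\in i\mathbb{R}$ and from the order of vanishing of $F$ at $z=1$.
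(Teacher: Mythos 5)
There is a genuine gap exactly at the step you flag as the hard part: the hoped-for identity ``after the symmetrizing shift, $Q$ equals (a unit times) $F$ evaluated at a fixed Blaschke-type map'' is not merely delicate to verify, it is false. Writing $F(z)=\sum_{j=0}^d f_j z^j$ and $g(t)=(t+n-1)_n$, one gets $\mathbf{T}^j g(t)^k=(t+n-d-1)_{n-d}^k\,A_j(t)^k$ with $A_j(t)=\prod_{r=1}^{j}(t-r)\cdot\prod_{m=n-d}^{n-1-j}(t+m)$, hence $Q(t)=\sum_{j=0}^d f_j A_j(t)^k$. The successive ratios $A_j(t)/A_{j-1}(t)=\frac{t-j}{t+n-j}$ are \emph{different} M\"obius maps for different $j$ (their unit-modulus lines $\mathrm{Re}\,t=j-n/2$ move with $j$), so the sum is not geometric in any fixed ratio and cannot be rewritten as $F$ (or $F(\cdot^k)$) composed with one rational map. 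A concrete counterexample: $n=3$, $k=1$, $F(z)=(z+1)^2$ gives $P(t)=t\,(4t^2+2)$, so $Q(t)=4t^2+2$ with two \emph{simple} roots $\pm i/\sqrt{2}$; if $Q$ were a constant times a power of one linear factor times $F(B(t))$ for any fixed rational $B$, then at least one of these two distinct roots would have to be a $B$-preimage of the double root of $F$ at $-1$ and hence a multiple root of $Q$. So the root-location statement cannot be obtained as ``preimages of the unit circle under one substitution'', and the proposal as written does not go through (the degree bookkeeping you sketch, namely that $(\mathbf{T}-1)$ lowers degree by one while $(\mathbf{T}-w)$, $w\neq 1$, preserves it, is fine and matches the paper).

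What does work, and is what the paper does, is to peel off the roots of $F$ one at a time by induction on $d$. Assuming all roots of $Q$ lie on the line $\mathrm{Re}\,t=(d-n+1)/2$, for any $|w|=1$ one factors $(\mathbf{T}-w)F(\mathbf{T})(t+n-1)_n^k=(t+n-d-2)_{n-d-1}^k\bigl((t+n-d-1)^kQ(t)-w(t-1)^kQ(t-1)\bigr)$, and a factor-by-factor modulus comparison (adapted from Lemma~9.13 of \cite{postnikov-stanley}) shows that any zero of the second factor off the line $\mathrm{Re}\,t=(d-n+2)/2$ would make one side of the equation strictly larger in modulus than the other. Note that the critical line advances by $1/2$ with each factor of $F$, which is precisely why no single symmetrizing substitution can treat all $d$ factors at once; the inductive modulus argument is the ingredient your proposal is missing.
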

\begin{proof}
  We first discuss the degrees. We observe that, for $k \geq 1$, we have $\mathbf{T}t^k = t^k + R(t)$ with $R(t)$ of degree $k-1$. Thus, for any polynomial $R \in \mathbb{C}[t]$ we have $\deg((\mathbf{T} - 1)R) = \deg(R) - 1$, while $\deg((\mathbf{T} - w)R) = \deg(R)$ for all $w \neq 1$. We check easily the degrees of $P(t)$ and $Q(t)$.

  We now proceed by induction on $d$. The case $d=0$ is trivial. Suppose that our claim holds for some $d < n - 1$. Then, for any $F(z)$ with degree $d$ and any $w \in \mathbb{C}$ with $|w|=1$, let $Q(t)$ be the polynomial in the induction hypothesis, and we have
  \begin{align*}
    (\mathbf{T} - w)F(\mathbf{T}) (t+n-1)_n^k &= (t+n-d-1)_{n-d}^k Q(t) - w(t+n-d-2)_{n-d}^k Q(t-1) \\
    &= (t+n-d-2)_{n-d-1}^k \left( (t+n-d-1)^k Q(t) - w(t-1)^kQ(t-1) \right).
  \end{align*}

  Let $Q_+(t) = (t+n-d-1)^k Q(t) - w(t-1)^kQ(t-1)$. To prove that $Q_+(t)$ has our claimed properties, we slightly adapt arguments in Lemma~9.13 in \cite{postnikov-stanley}. By induction hypothesis, we can write
  \[
    Q(t) = \prod_{j = 1}^{kd-c} \left( t - \frac{d-n+1}{2} + i \delta_j \right)
  \]
  for some real numbers $\delta_j$. Let $t_0$ be a zero of $Q_+(t)$. By writing $t_0 = (d-n+2)/2 + \alpha + i\beta$ for $\alpha, \beta \in \mathbb{R}$, since $|w| = 1$, we have
  \[
    \left| \frac{n-d}{2} + \alpha + i\beta \right|^k \prod_{j = 1}^{kd-c} \left| \frac{1}{2} + \alpha + i (\delta_j+\beta) \right| = \left| \frac{d-n}{2} + \alpha + i\beta \right|^k \prod_{j = 1}^{kd-c} \left| - \frac{1}{2} + \alpha + i (\delta_j+\beta) \right|.
  \]
  If $\alpha > 0$, since $d < n - 1$, we have both $|(n-d)/2 + \alpha + i\beta| > |(d-n)/2 + \alpha + i\beta|$ and $|1/2 + \alpha + i(\delta_j + \beta)| > |-1/2 + \alpha + i(\delta_j + \beta)|$. Therefore, the equality above cannot hold. The case $\alpha < 0$ is similar, with the two inequalities above reversed. Therefore, we have $\alpha = 0$, meaning that any zero of $Q_+(t)$ must have real part $(d-n+2)/2$. We thus complete the induction step from $d$ to $d+1$ for $d < n-1$. The induction for $d \geq n-1$ is essentially the same, and can be viewed as taking $k=0$ in $Q_+(t)$.
\end{proof}

To show that $P_{n,\lambda}^{(m)}$ is real-rooted, we need the following summation on characters of the form $\chi^{\hook(n,a)}_{\lambda}$, whose proof can be found in \cite{zagier1995distribution}.

\begin{prop}[Equation~(9) in \cite{zagier1995distribution}] \label{prop:chara-sum}
  For $n > 0$ and $\lambda = (\lambda_1, \lambda_2, \ldots) \vdash n$, we have
  \[
    \sum_{a=0}^{n-1} (-1)^a \chi^{\hook(n,a)}_\lambda z^a = \frac{1}{1-z} \prod_{j=1}^{\ell(\lambda)} (1-z^{\lambda_j}).
  \]
\end{prop}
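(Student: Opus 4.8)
The plan is to read the left-hand side through the Frobenius characteristic and to identify it as the pairing of $p_\lambda$ with an explicit generating series of hook Schur functions. Recall that for every $\theta \vdash n$ one has $\chi^\theta_\lambda = \langle p_\lambda, s_\theta \rangle$, where $p_\lambda$ and $s_\theta$ denote the power-sum and Schur symmetric functions and $\langle\cdot,\cdot\rangle$ is the Hall inner product (equivalently, $p_\lambda = \sum_{\theta \vdash n} \chi^\theta_\lambda s_\theta$). Therefore the sum on the left is $\langle p_\lambda, S_n(z) \rangle$, where
\[
  S_n(z) = \sum_{a=0}^{n-1} (-1)^a z^a\, s_{\hook(n,a)},
\]
and it suffices to put $S_n(z)$ in a shape from which the pairing with $p_\lambda$ is transparent.

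To compute $S_n(z)$, I would work with the graded series $\sum_{n \geq 1} q^n S_n(z)$ in an auxiliary variable $q$. Using the Jacobi--Trudi expansion of a hook, $s_{\hook(n,a)} = \sum_{i=0}^{a} (-1)^i h_{n-a+i} e_{a-i}$, and the substitution $k = n-a+i$, $b = a-i$ (so $k+b=n$, $(-1)^{a+i} = (-1)^b$ and $z^a = z^{b+i}$), the resulting triple sum factors once the inner geometric sum $\sum_{i=0}^{k-1} z^i = (1-z^k)/(1-z)$ is carried out, yielding
\[
  \sum_{n \geq 1} q^n S_n(z) = \frac{1}{1-z}\, E(-zq)\left( H(q) - H(zq) \right),
\]
where $H(q) = \sum_{k \geq 0} h_k q^k = \prod_i (1-qx_i)^{-1}$ and $E(q) = \sum_{k \geq 0} e_k q^k = \prod_i (1+qx_i)$. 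Since $E(-zq) H(zq) = 1$, this collapses to $\frac{1}{1-z}(E(-zq)H(q) - 1)$, and inserting the exponential formulas $H(q) = \exp(\sum_{j \geq 1} p_j q^j / j)$ and $E(q) = \exp(\sum_{j \geq 1} (-1)^{j-1} p_j q^j / j)$ gives the clean form
\[
  \sum_{n \geq 1} q^n S_n(z) = \frac{1}{1-z}\left( \exp\left( \sum_{j \geq 1} \frac{p_j q^j (1-z^j)}{j} \right) - 1 \right).
\]

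Finally I would extract the coefficient of $q^n$ for $n \geq 1$, where the constant term $-1$ drops out, and pair with $p_\lambda$. From the identity $\exp(\sum_j c_j p_j q^j / j) = \sum_\mu z_\mu^{-1} p_\mu q^{|\mu|} \prod_j c_j^{m_j(\mu)}$ with $c_j = 1-z^j$, one reads off $[q^n]\exp(\cdots) = \sum_{\mu \vdash n} z_\mu^{-1} p_\mu \prod_j (1-z^j)^{m_j(\mu)}$; pairing with $p_\lambda$ and using $\langle p_\lambda, p_\mu \rangle = z_\lambda \delta_{\lambda\mu}$ leaves only the term $\mu = \lambda$ and produces $\prod_j (1-z^j)^{m_j(\lambda)} = \prod_{k=1}^{\ell(\lambda)} (1-z^{\lambda_k})$. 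Dividing by $1-z$ and recalling $\langle p_\lambda, S_n(z) \rangle = \sum_{a=0}^{n-1} (-1)^a z^a \chi^{\hook(n,a)}_\lambda$ gives exactly the asserted identity.

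I do not anticipate a real obstacle: the computation is essentially bookkeeping, with the only care needed being the change of summation indices and the (routine) justification that all manipulations are legitimate as identities of formal power series in $q$ over the ring of symmetric functions. A more self-contained alternative would be an induction on $\ell(\lambda)$ using the Murnaghan--Nakayama rule to remove a rim hook of size $\lambda_1$ from $\hook(n,a)$; this works but requires a somewhat tedious enumeration of which rim hooks of a prescribed size are removable from a hook shape, together with their heights, so I would favour the symmetric-function argument above.
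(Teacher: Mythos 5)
Your proof is correct, and it is worth noting that the paper contains no proof of this proposition at all: it is simply quoted as Equation~(9) of the cited paper of Zagier, so your symmetric-function derivation supplies a self-contained argument where the paper only gives a reference. The steps all check out: the identification $\chi^\theta_\lambda = \langle p_\lambda, s_\theta\rangle$; the hook expansion $s_{(n-a,1^a)} = \sum_{i=0}^{a}(-1)^i h_{n-a+i}e_{a-i}$; the reindexing $k=n-a+i$, $b=a-i$, where the constraint $a\le n-1$ is exactly what truncates the inner sum to $i=0,\dots,k-1$ and produces the factor $(1-z^k)/(1-z)$, giving $\frac{1}{1-z}E(-zq)\bigl(H(q)-H(zq)\bigr)$; the cancellation $E(-zq)H(zq)=1$; and the extraction of $[q^n]$ from $\exp\bigl(\sum_{j\ge1} p_j q^j(1-z^j)/j\bigr)$ followed by the pairing $\langle p_\lambda,p_\mu\rangle = z_\lambda\delta_{\lambda\mu}$, which leaves precisely $\prod_{j}(1-z^{\lambda_j})$. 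All manipulations are legitimate as formal power series, and a quick sanity check at $n=2$ ($\lambda=(2)$ gives $1+z$, $\lambda=(1,1)$ gives $1-z$) confirms the signs. Your remark about the alternative Murnaghan--Nakayama induction is also accurate: it works but requires classifying the rim hooks removable from a hook shape, so the generating-function route you chose is the cleaner one, and it is in the same spirit as the classical derivations of this identity.
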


We now reach our main result on real-rootedness.

\begin{thm} \label{thm:main}
  For any intergers $n > 0$ and $m \geq 2$, for any partition $\lambda \vdash n$, all the roots of $\Gamma^{(m)}_{n,\lambda}(x)$ are real and non-positive.
\end{thm}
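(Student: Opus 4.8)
The plan is to work with the rescaled polynomial $P^{(m)}_{n,\lambda}(t)$ from \eqref{eq:p-def} and to recognize the character sum in Theorem~\ref{thm:p-expr} as a backward-shift operator applied to a power of a falling factorial, so that Proposition~\ref{prop:shift} applies directly. The key observation is that $H_{n,a}(t)=\prod_{k=-a}^{n-a-1}(t+k)$ equals $t(t+1)\cdots(t+n-1)$ with $t$ replaced by $t-a$, that is, $H_{n,a}(t)=\mathbf{T}^{a}\,(t+n-1)_{n}$. Since $\mathbf{T}$ is an algebra endomorphism of $\mathbb{C}[t]$ (it satisfies $\mathbf{T}(PQ)=(\mathbf{T}P)(\mathbf{T}Q)$), the exponent $m-1$ passes through, giving $H_{n,a}(t)^{m-1}=\mathbf{T}^{a}\big((t+n-1)_{n}^{m-1}\big)$. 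Substituting into Theorem~\ref{thm:p-expr},
\[
  P^{(m)}_{n,\lambda}(t)=\frac{|C(\lambda)|}{n}\,t^{\ell(\lambda)}\,F(\mathbf{T})\big((t+n-1)_{n}^{m-1}\big),\qquad
  F(z)=\sum_{a=0}^{n-1}(-1)^{a}\chi^{\hook(n,a)}_{\lambda}\,z^{a}.
\]
By Proposition~\ref{prop:chara-sum}, $F(z)=(1-z)^{-1}\prod_{j=1}^{\ell(\lambda)}(1-z^{\lambda_{j}})$, which is precisely the type of polynomial Proposition~\ref{prop:shift} is built to handle.

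Next I would verify the hypotheses of Proposition~\ref{prop:shift} for this $F$. Because $1-z$ divides each factor $1-z^{\lambda_{j}}$, $F$ is an honest polynomial, nonzero, of degree $d=\big(\sum_{j}\lambda_{j}\big)-1=n-1$; all its zeros are roots of unity (one factor $1-z$ having been cancelled), hence of modulus $1$. Moreover $z=1$ is a simple root of each $1-z^{\lambda_{j}}=(1-z)(1+z+\cdots+z^{\lambda_{j}-1})$, so it has multiplicity $\ell(\lambda)$ in the product and multiplicity $c=\ell(\lambda)-1$ in $F$. Applying Proposition~\ref{prop:shift} with $k=m-1\in\mathbb{N}_{+}$ and $d=n-1$ (the boundary case, where $(t+n-d-1)_{n-d}^{k}$ specializes to $(t)_{1}^{k}=t^{k}$ and the common real part $(d-n+1)/2$ equals $0$) yields
\[
  F(\mathbf{T})\big((t+n-1)_{n}^{m-1}\big)=t^{m-1}Q(t),
\]
where $Q$ has degree $(m-1)(n-1)-(\ell(\lambda)-1)$ and all roots of $Q$ are purely imaginary. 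Hence $P^{(m)}_{n,\lambda}(t)=\frac{|C(\lambda)|}{n}\,t^{\ell(\lambda)+m-1}Q(t)$, so every nonzero root of $P^{(m)}_{n,\lambda}$ is purely imaginary.

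Finally, I would transfer this back to $\Gamma^{(m)}_{n,\lambda}$ through $P^{(m)}_{n,\lambda}(t)=t^{(m-1)n+1}\Gamma^{(m)}_{n,\lambda}(t^{-2})$. Let $x_{0}$ be any root of $\Gamma^{(m)}_{n,\lambda}$. If $x_{0}=0$ it is non-positive. Otherwise choose $t_{0}\neq0$ with $t_{0}^{2}=x_{0}^{-1}$; then $P^{(m)}_{n,\lambda}(t_{0})=t_{0}^{(m-1)n+1}\Gamma^{(m)}_{n,\lambda}(x_{0})=0$, so $t_{0}$ is a nonzero root of $P^{(m)}_{n,\lambda}$, hence purely imaginary, $t_{0}=i\alpha$ with $\alpha\in\mathbb{R}\setminus\{0\}$, and therefore $x_{0}=t_{0}^{-2}=-\alpha^{-2}<0$. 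Thus all roots of $\Gamma^{(m)}_{n,\lambda}$ are real and non-positive.

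Once the tools are assembled the argument is essentially forced; the steps that need genuine care are the algebra-homomorphism property of $\mathbf{T}$ (which is what lets the $(m-1)$-st power commute with $\mathbf{T}^{a}$) and the precise bookkeeping showing $F$ has degree exactly $n-1$ with $c=\ell(\lambda)-1$, so that the borderline case $d=n-1$ of Proposition~\ref{prop:shift} is the one that applies and produces real part exactly $0$. I do not expect a substantive obstacle beyond these verifications.
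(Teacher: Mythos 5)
Your proposal is correct and follows essentially the same route as the paper: rewriting $P^{(m)}_{n,\lambda}(t)$ as $\frac{|C(\lambda)|}{n}t^{\ell(\lambda)}R_{n,\lambda}(\mathbf{T})H_{n,0}(t)^{m-1}$ via Theorem~\ref{thm:p-expr} and Proposition~\ref{prop:chara-sum}, checking that $R_{n,\lambda}$ has degree $n-1$, zeros of modulus $1$, and multiplicity $\ell(\lambda)-1$ at $1$, then invoking Proposition~\ref{prop:shift} and transferring purely imaginary roots of $P^{(m)}_{n,\lambda}$ to real non-positive roots of $\Gamma^{(m)}_{n,\lambda}$ through $x=t^{-2}$. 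Your added care about the multiplicativity of $\mathbf{T}$ and the boundary case $d=n-1$ only makes explicit what the paper leaves implicit.
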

\begin{proof}
  We first observe that, for $0 \leq a \leq n-1$, we have $H_{n,a}(t) = \mathbf{T}^a H_{n,0}(t)$. Let $R_{n,\lambda}(z) = \frac{1}{1-z} \prod_{j=1}^{\ell(\lambda)} (1-z^{\lambda_j})$. By Theorem~\ref{thm:p-expr}~and~\ref{prop:chara-sum}, we have
  \begin{equation} \label{eq:op-expr}
    P^{(m)}_{n,\lambda}(t) = \frac{\left| C(\lambda) \right|}{n} t^{\ell(\lambda)} R_{n,\lambda}(\mathbf{T}) H_{n,0}(t)^{m-1}.
  \end{equation}
  It is clear that $R_{n,\lambda}(z)$ is a polynomial of degree $n-1$ with all zeros of modulus $1$. Furthermore, $1$ is a root of multiplicity $\ell(\lambda)-1$ in $R_{n,\lambda}(z)$. We also observe that $H_{n,0}(t) = (t+n-1)_n$. By Proposition~\ref{prop:shift}, the roots of $P^{(m)}_{n,\lambda}(t)$ are all in $i\mathbb{R}$. By \eqref{eq:p-def}, all the roots of $\Gamma^{(m)}_{n,\lambda}(x)$ are real and non-positive.
\end{proof}

\begin{coro} \label{coro:dipole}
  The genus distribution of $D_{n,\lambda}$ is log-concave.
\end{coro}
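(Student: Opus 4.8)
The plan is to deduce this directly from Theorem~\ref{thm:main} and Newton's inequalities (Lemma~\ref{lem:real-rooted}), with no new computation. First I would specialize Theorem~\ref{thm:main} to $m=3$: it gives that every root of $\Gamma^{(3)}_{n,\lambda}(x)$ is real and non-positive. Recall from the discussion preceding \eqref{eq:genus-poly-const} that rotation systems of bi-Eulerian maps are in genus-preserving bijection with factorizations in $\mathcal{M}_3(n,\lambda)$, so this is exactly the generating polynomial relevant to $D_{n,\lambda}$.

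Next I would invoke \eqref{eq:gpoly-gamma}, which writes $\Gamma_{n,\lambda}(x) = \Gamma_{D_{n,\lambda}}(x)$ as a strictly positive rational constant (the factor $\frac1{n!}(\prod_i \lambda_i!)^2 \prod_i m_i(\lambda)!$) times $\Gamma^{(3)}_{n,\lambda}(x)$. Multiplying by a positive constant changes neither the set of roots nor the sign pattern of the coefficients, so $\Gamma_{n,\lambda}(x)$ is itself real-rooted with non-positive roots, and its coefficients are non-negative since by \eqref{eq:genus-poly-def} they count face-oriented embeddings of $D_{n,\lambda}$.

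Finally I would apply Lemma~\ref{lem:real-rooted} to the coefficient sequence of $\Gamma_{n,\lambda}(x)$, which is precisely the genus distribution of $D_{n,\lambda}$: a non-negative real-rooted sequence is log-concave, which is the claim. There is essentially no obstacle here, the corollary being just a repackaging of Theorem~\ref{thm:main}; the only points needing a line of care are that the constant in \eqref{eq:gpoly-gamma} is positive and that the coefficient sequence has no internal zeros (so that it fits the definition of log-concavity literally), the latter following from the fact that a real-rooted polynomial with non-negative coefficients has no internal zero once the lowest power of $x$ is factored out.
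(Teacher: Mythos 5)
Your proposal is correct and follows the paper's own proof exactly: specialize Theorem~\ref{thm:main} to $m=3$, transfer real-rootedness to $\Gamma_{n,\lambda}(x)$ via the positive constant in \eqref{eq:gpoly-gamma}, and apply Lemma~\ref{lem:real-rooted}. The extra remarks about positivity of the constant and absence of internal zeros are fine but not a departure from the paper's argument.
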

\begin{proof}
  It suffices to start from \eqref{eq:gpoly-gamma}, then take $m=3$ in Theorem~\ref{thm:main} and apply Lemma~\ref{lem:real-rooted}.
\end{proof}

\section{Asymptotic genus distributions} \label{sec:asympt}

We now try to compute the asymptotic genus distribution for some special cases of $\mathcal{M}_m(n,\lambda)$ when $n \to +\infty$, using the real-rootedness of $\Gamma^{(m)}_{n,\lambda}(x)$ and Lemma~\ref{lem:rr-dist}. We only need to compute the mean and the variance of genus. We start by some computational results.

\begin{prop} \label{prop:simple-op}
  For $n \geq 0$, $m \geq 3$ and $1 \leq k \leq n-1$, we have
  \begin{equation} \label{eq:diff1}
    \left. \mathbf{T}^k H_{n,0}(t)^{m-1} \right|_{t=1} = \left. \frac{d}{dt} \mathbf{T}^k H_{n,0}(t)^{m-1} \right|_{t=1} = 0.
  \end{equation}
  Furthermore, for $m \geq 4$, we have
  \begin{equation} \label{eq:diff2-large}
    \left. \frac{d^2}{dt^2} \mathbf{T}^k H_{n,0}(t)^{m-1} \right|_{t=1} = 0.
  \end{equation}
  We also have, for $1 \leq k \leq n-1$,
  \begin{equation} \label{eq:diff2-small}
    \left. \frac{d^2}{dt^2} \mathbf{T}^k H_{n,0}(t)^{2} \right|_{t=1} = 2 \left( (n-k)! (k-1)! \right)^2.
  \end{equation}
\end{prop}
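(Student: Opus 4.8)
The plan is to use the explicit product expression $H_{n,0}(t) = (t+n-1)_n = \prod_{i=0}^{n-1}(t+i)$ together with the fact, already used in the proof of Theorem~\ref{thm:main}, that $\mathbf{T}^k H_{n,0}(t) = H_{n,0}(t-k)$; since $\mathbf{T}$ is a ring homomorphism on $\mathbb{C}[t]$ it commutes with taking powers, so $\mathbf{T}^k H_{n,0}(t)^{m-1} = \prod_{i=0}^{n-1}(t-k+i)^{m-1}$. Evaluating the factors at $t=1$, the factor with index $i = k-1$ equals $1-k+(k-1) = 0$, and since $1 \le k \le n-1$ this index does lie in $\{0,\ldots,n-1\}$; moreover all the other $n-1$ factors $1-k+i$ with $i \ne k-1$ are nonzero. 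Hence $t=1$ is a root of $\mathbf{T}^k H_{n,0}(t)$ of multiplicity exactly $1$, and therefore a root of $\mathbf{T}^k H_{n,0}(t)^{m-1}$ of multiplicity exactly $m-1$.

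Given this, \eqref{eq:diff1} and \eqref{eq:diff2-large} follow immediately from the elementary fact that a polynomial with a zero of multiplicity $r$ at a point has all derivatives of order strictly less than $r$ vanishing there: for $m \ge 3$ we have $r = m-1 \ge 2$, which kills the value and the first derivative at $t=1$; for $m \ge 4$ we have $r \ge 3$, which additionally kills the second derivative.

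For \eqref{eq:diff2-small} I would write $\mathbf{T}^k H_{n,0}(t) = (t-1)\,g(t)$ with $g(t) = \prod_{i=0,\,i \ne k-1}^{n-1}(t-k+i)$, so that $\mathbf{T}^k H_{n,0}(t)^2 = (t-1)^2 g(t)^2$ and differentiating twice gives $\left.\tfrac{d^2}{dt^2}\mathbf{T}^k H_{n,0}(t)^2\right|_{t=1} = 2\,g(1)^2$. It then remains to evaluate $g(1) = \prod_{i \ne k-1}(1-k+i)$; re-indexing by $j = i-(k-1)$, which ranges over $\{-(k-1),\ldots,n-k\}\setminus\{0\}$, splits the product into $\prod_{j=1}^{k-1}(-j)$ and $\prod_{j=1}^{n-k} j$, giving $g(1) = (-1)^{k-1}(k-1)!\,(n-k)!$. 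Squaring yields $2\,g(1)^2 = 2\big((n-k)!\,(k-1)!\big)^2$, as required.

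The argument involves no serious obstacle; it is a direct local computation once one notices that the shifted product $H_{n,0}(t-k)$ has a simple zero at $t=1$. The only place calling for a little care is the re-indexing step for $g(1)$ — keeping track of exactly which factor is omitted and handling the sign $(-1)^{k-1}$, which disappears upon squaring — together with the observation that the hypotheses $1 \le k \le n-1$ are precisely what make the zero present (in particular $k \le n-1$ forces $n-k \ge 1$) and simple. I would sanity-check the boundary cases $k=1$ and $k=n-1$ by hand to confirm the re-indexing.
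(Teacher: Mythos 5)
Your proof is correct and follows essentially the same route as the paper: both identify the factor $(t-1)$ in $\mathbf{T}^k H_{n,0}(t)=\prod_{i=0}^{n-1}(t-k+i)$ for $1\le k\le n-1$, deduce \eqref{eq:diff1} and \eqref{eq:diff2-large} from the resulting multiplicity $m-1$ of the zero at $t=1$, and obtain \eqref{eq:diff2-small} by writing $\mathbf{T}^k H_{n,0}(t)^2=(t-1)^2g(t)^2$ and evaluating $2g(1)^2=2\left((n-k)!\,(k-1)!\right)^2$, exactly as the paper does with its remainder factor $R(t)$.
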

\begin{proof}
  We observe that $H_{n,0}(t) = (t+n-1)_n$, meaning that $\mathbf{T}^k H_{n,0}(t)^{m-1} = (t-k+n-1)_n^{m-1}$ contains a factor $(t-1)^{m-1}$. Therefore, both $\mathbf{T}^k H_{n,0}(t)^{m-1}$ and $\frac{d}{dt} \mathbf{T}^k H_{n,0}(t)^{m-1}$ contain a factor $(t-1)^{m-2}$. When $m \geq 3$, an evaluation at $t=1$ gives $0$. The second equality can be obtained similarly.

  For the last equality, we write $\mathbf{T}^k H_{n,0}(t)^2 = (t-1)^2 R(t)$, with $R(t) = (t-k+n-1)_{n-k}^2 (t-2)_{k-1}^2$. The left-hand side of the third equality is thus equal to $2R(1)$.
\end{proof}

Given $m \geq 3$, $n > 0$ and $\lambda \vdash n$, let $M$ be a random element of $\mathcal{M}_m(n,\lambda)$ chosen uniformly. We consider the random variable $X_{n,\lambda}^{(m)} = (mn+1) - 2g(M)$. From \eqref{eq:p-def} we see that the characteristic polynomial of $X_{n,\lambda}^{(m)}$ is $P_{n,\lambda}^{(m)}(t) / P_{n,\lambda}^{(m)}(1)$. We now study the expectation and variance of $X_{n,\lambda}^{(m)}$. We use the polynomial $R_{n,\lambda}(z) = (1-z)^{-1} \prod_{j=1}^{\ell(\lambda)} (1-z^{\lambda_j})$ as defined in the proof of Theorem~\ref{thm:main}.

\begin{prop} \label{prop:expectation}
  For any integer $n > 0$ and $m \geq 3$, and for any partition $\lambda \vdash n$, we have
  \[
    \mathbb{E}[X_{n,\lambda}^{(m)}] = \ell(\lambda) + (m-1) \sum_{k=1}^{n} \frac{1}{k}.
  \]
\end{prop}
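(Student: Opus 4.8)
The plan is to compute $\mathbb{E}[X_{n,\lambda}^{(m)}]$ directly from the characteristic polynomial. Since the characteristic polynomial of $X_{n,\lambda}^{(m)}$ is $P_{n,\lambda}^{(m)}(t)/P_{n,\lambda}^{(m)}(1)$, the mean is the logarithmic derivative evaluated at $t=1$: $\mathbb{E}[X_{n,\lambda}^{(m)}] = (P_{n,\lambda}^{(m)})'(1)/P_{n,\lambda}^{(m)}(1)$. So the whole computation reduces to evaluating $P_{n,\lambda}^{(m)}$ and its first derivative at $t=1$. For this I would use the operator expression \eqref{eq:op-expr}, namely $P^{(m)}_{n,\lambda}(t) = \frac{|C(\lambda)|}{n} t^{\ell(\lambda)} R_{n,\lambda}(\mathbf{T}) H_{n,0}(t)^{m-1}$, together with Proposition~\ref{prop:simple-op}.

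The key simplification is that $R_{n,\lambda}(z) = \sum_{a=0}^{n-1} r_a z^a$ has $r_0 = 1$ (the constant term), so $R_{n,\lambda}(\mathbf{T}) H_{n,0}(t)^{m-1} = H_{n,0}(t)^{m-1} + \sum_{a=1}^{n-1} r_a \mathbf{T}^a H_{n,0}(t)^{m-1}$. By Proposition~\ref{prop:simple-op}, for $1 \leq a \leq n-1$ both $\mathbf{T}^a H_{n,0}(t)^{m-1}$ and its derivative vanish at $t=1$. Hence at $t=1$ only the $a=0$ term survives: $\left.R_{n,\lambda}(\mathbf{T})H_{n,0}(t)^{m-1}\right|_{t=1} = H_{n,0}(1)^{m-1} = (n!)^{m-1}$, and similarly for the derivative, $\left.\frac{d}{dt}R_{n,\lambda}(\mathbf{T})H_{n,0}(t)^{m-1}\right|_{t=1} = \left.\frac{d}{dt}H_{n,0}(t)^{m-1}\right|_{t=1}$. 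Writing $P^{(m)}_{n,\lambda}(t) = \frac{|C(\lambda)|}{n} t^{\ell(\lambda)} G(t)$ with $G(t) = R_{n,\lambda}(\mathbf{T})H_{n,0}(t)^{m-1}$, the product rule gives $\mathbb{E}[X_{n,\lambda}^{(m)}] = \frac{(P_{n,\lambda}^{(m)})'(1)}{P_{n,\lambda}^{(m)}(1)} = \ell(\lambda) + \frac{G'(1)}{G(1)}$.

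It remains to compute $G'(1)/G(1) = \left.\frac{d}{dt}\log H_{n,0}(t)^{m-1}\right|_{t=1} = (m-1)\left.\frac{d}{dt}\log H_{n,0}(t)\right|_{t=1}$. Since $H_{n,0}(t) = (t+n-1)_n = \prod_{k=0}^{n-1}(t+n-1-k) = \prod_{j=1}^{n}(t-1+j)$, we get $\frac{d}{dt}\log H_{n,0}(t) = \sum_{j=1}^{n}\frac{1}{t-1+j}$, which at $t=1$ equals $\sum_{j=1}^{n}\frac{1}{j}$. Combining everything yields $\mathbb{E}[X_{n,\lambda}^{(m)}] = \ell(\lambda) + (m-1)\sum_{k=1}^{n}\frac{1}{k}$, as claimed.

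I do not anticipate a serious obstacle here; the only thing to be careful about is the bookkeeping around $t=1$, in particular checking that the factor $t^{\ell(\lambda)}$ and the normalizing constant $\frac{|C(\lambda)|}{n}$ cancel cleanly in the logarithmic derivative (they do, since the constant drops out and $\frac{d}{dt}\log t^{\ell(\lambda)}\big|_{t=1} = \ell(\lambda)$), and that Proposition~\ref{prop:simple-op} genuinely kills all the $a \geq 1$ contributions both for $G(1)$ and for $G'(1)$. Everything else is a routine manipulation of falling factorials.
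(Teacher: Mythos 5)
Your proposal is correct and follows essentially the same route as the paper: both start from $\mathbb{E}[X_{n,\lambda}^{(m)}] = (P_{n,\lambda}^{(m)})'(1)/P_{n,\lambda}^{(m)}(1)$, use the operator expression \eqref{eq:op-expr} together with Proposition~\ref{prop:simple-op} to discard all $\mathbf{T}^a$ terms with $a \geq 1$ (valid since $m \geq 3$), and extract the harmonic sum from the logarithmic derivative of $H_{n,0}(t)^{m-1}$ at $t=1$. The only cosmetic difference is that you phrase the computation via logarithmic derivatives while the paper expands the derivatives explicitly.
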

\begin{proof}
  We first observe that
  \[
    \mathbb{E}[X_{n,\lambda}^{(m)}] = \frac1{P_{n,\lambda}^{(m)}(1)} \cdot \left. \frac{d}{dt} P_{n,\lambda}^{(m)}(t) \right|_{t=1}.
  \]
  Now we only need to compute $\left. \frac{d}{dt} P_{n,\lambda}^{(m)}(t) \right|_{t=1}$ and $P_{n,\lambda}^{(m)}(1)$. We first have the following equality:
  \begin{align}
    \begin{split} \label{eq:diff-H-eval}
      \frac{d}{dt} H_{n,0}(t)^{m-1} &= \sum_{k=0}^{n-1} (m-1)(t+k)^{m-2} \prod_{0 \leq j \leq n-1, j \neq k} (t+j)^{m-1} \\
      &= (m-1) H_{n,0}(t)^{m-1} \sum_{k=0}^{n-1} \frac1{t+k}.
    \end{split}
  \end{align}
  We also notice that $R_{n,\lambda}(z)$ is a polynomial in $z$ of degree $n-1$, and $[z^0]R_{n,\lambda}(z) = 1$. Then, by differentiating \eqref{eq:op-expr} and then plugging in \eqref{eq:diff1} and \eqref{eq:diff-H-eval}, we have
  \begin{align*}
    \left. \frac{d}{dx} P_{n,\lambda}^{(m)}(x) \right|_{x=1} &=  \frac{|C(\lambda)|}{n} \left( \ell(\lambda) (n!)^{m-1} + \left. \frac{d}{dt} H_{n,0}(t)^{m-1} \right|_{t=1} \right) \\
    &= \frac{|C(\lambda)|}{n} (n!)^{m-1} \left( \ell(\lambda) + (m-1) \sum_{k=1}^{n} \frac{1}{k} \right).
  \end{align*}
  To compute $P_{n,\lambda}^{(m)}(1)$, we also combine \eqref{eq:op-expr} and \eqref{eq:diff1} to have
  \begin{equation} \label{eq:normalizer}
    P_{n,\lambda}^{(m)}(1) = \frac{|C(\lambda)|}{n} (n!)^{m-1}.    
  \end{equation}
  We thus have our claim.
\end{proof}

\begin{prop} \label{prop:variance}
  For any integer $n > 0$ and $m \geq 4$, and for any partition $\lambda \vdash n$, we have
  \[
    \mathbf{Var}[X_{n,\lambda}^{(m)}] = (m-1) \sum_{k=1}^n \left( \frac1{k} - \frac1{k^2} \right).
  \]
  For the case $m = 3$, we have
  \[
    \mathbf{Var}[X_{n,\lambda}^{(3)}] = 2 \sum_{k=1}^n \left( \frac1{k} - \frac1{k^2} \right) + 2n^{-2} \sum_{k=1}^{n-1} \binom{n-1}{k-1}^{-2} [z^k]R_{n,\lambda}(z).
  \]
\end{prop}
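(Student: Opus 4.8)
The plan is to compute the second derivative $\frac{d^2}{dt^2} P_{n,\lambda}^{(m)}(t) |_{t=1}$ using the operator expression \eqref{eq:op-expr}, and then assemble the variance from
\[
  \mathbf{Var}[X_{n,\lambda}^{(m)}] = \frac{1}{P_{n,\lambda}^{(m)}(1)} \left. \frac{d^2}{dt^2} P_{n,\lambda}^{(m)}(t) \right|_{t=1} + \mathbb{E}[X_{n,\lambda}^{(m)}] - \left( \mathbb{E}[X_{n,\lambda}^{(m)}] \right)^2,
\]
since $\mathbf{Var}(Y) = \mathbb{E}[Y(Y-1)] + \mathbb{E}[Y] - \mathbb{E}[Y]^2$ for any random variable $Y$, and $\frac{1}{P_{n,\lambda}^{(m)}(1)} \frac{d^2}{dt^2} P_{n,\lambda}^{(m)}(t)|_{t=1}$ is exactly the factorial moment $\mathbb{E}[X_{n,\lambda}^{(m)}(X_{n,\lambda}^{(m)}-1)]$. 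The normalizer $P_{n,\lambda}^{(m)}(1) = \frac{|C(\lambda)|}{n}(n!)^{m-1}$ is already recorded in \eqref{eq:normalizer}, and $\mathbb{E}[X_{n,\lambda}^{(m)}]$ is given by Proposition~\ref{prop:expectation}, so the whole task reduces to the second derivative.

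First I would write $R_{n,\lambda}(z) = \sum_{k=0}^{n-1} r_k z^k$ with $r_0 = 1$, so that by \eqref{eq:op-expr},
\[
  P_{n,\lambda}^{(m)}(t) = \frac{|C(\lambda)|}{n} t^{\ell(\lambda)} \sum_{k=0}^{n-1} r_k \, \mathbf{T}^k H_{n,0}(t)^{m-1}.
\]
Applying the product rule for the second derivative at $t=1$ produces three kinds of terms: those hitting $t^{\ell(\lambda)}$ twice, those hitting it once and $\mathbf{T}^k H_{n,0}(t)^{m-1}$ once, and those hitting $\mathbf{T}^k H_{n,0}(t)^{m-1}$ twice. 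Here Proposition~\ref{prop:simple-op} does the heavy lifting: for $1 \le k \le n-1$ the quantities $\mathbf{T}^k H_{n,0}(t)^{m-1}|_{t=1}$ and its first derivative vanish by \eqref{eq:diff1}, so the only surviving $k\ge 1$ contribution to the second derivative comes through the "$\mathbf{T}^k H_{n,0}$ twice" term, i.e. through $\frac{d^2}{dt^2}\mathbf{T}^k H_{n,0}(t)^{m-1}|_{t=1}$. For $m \ge 4$ this too vanishes by \eqref{eq:diff2-large}, so only the $k=0$ term $r_0 \cdot \frac{d^2}{dt^2}\bigl(t^{\ell(\lambda)} H_{n,0}(t)^{m-1}\bigr)|_{t=1}$ remains; expanding that with \eqref{eq:diff-H-eval} (and its derivative) gives a clean closed form in terms of $\sum 1/k$ and $\sum 1/k^2$, which after subtracting $\mathbb{E}^2$ and adding $\mathbb{E}$ collapses to $(m-1)\sum_{k=1}^n (1/k - 1/k^2)$ — I expect the $\ell(\lambda)$-dependence and the squared harmonic sum to cancel exactly, mirroring the standard computation in \cite{stanley-factorization}. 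For $m=3$ the term $\frac{d^2}{dt^2}\mathbf{T}^k H_{n,0}(t)^{2}|_{t=1}$ is no longer zero but is given explicitly by \eqref{eq:diff2-small} as $2((n-k)!(k-1)!)^2$; summing $r_k$ against this over $1 \le k \le n-1$, dividing by $P_{n,\lambda}^{(3)}(1) = \frac{|C(\lambda)|}{n}(n!)^2$, and using $(n!/((n-k)!(k-1)!))^2 = (n \binom{n-1}{k-1})^2 \cdot$ (a harmless factor) produces the correction term $2n^{-2}\sum_{k=1}^{n-1}\binom{n-1}{k-1}^{-2}[z^k]R_{n,\lambda}(z)$, while the $k=0$ part reproduces the $2\sum(1/k-1/k^2)$ piece exactly as in the $m\ge 4$ case.

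The main obstacle will be bookkeeping the $k=0$ contribution cleanly: one must differentiate the product $t^{\ell(\lambda)} H_{n,0}(t)^{m-1}$ twice, and although \eqref{eq:diff-H-eval} handles the first derivative of $H_{n,0}(t)^{m-1}$, the second derivative introduces a $\bigl(\sum 1/(t+k)\bigr)^2$ term and a $\sum 1/(t+k)^2$ term evaluated at $t=1$, plus cross terms with $t^{\ell(\lambda)}$. Keeping these organized so that the final subtraction $+\mathbb{E} - \mathbb{E}^2$ visibly kills the $\ell(\lambda)^2$, the $\ell(\lambda)\cdot(m-1)\sum 1/k$, and the $(m-1)^2(\sum 1/k)^2$ terms is the only delicate point; everything else is a direct substitution of the three displays from Proposition~\ref{prop:simple-op}. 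I would also double-check that the $m=3$ correction term is genuinely absent when $\lambda=[n]$ (where $R_{n,[n]}(z) = 1 + z + \cdots + z^{n-1}$, so all $r_k=1$) against the small-$n$ data in Table~\ref{tab:dipole}, as a sanity check on the normalization of the $\binom{n-1}{k-1}^{-2}$ factor.
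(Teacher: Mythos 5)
Your proposal is correct and follows essentially the same route as the paper: the factorial-moment identity for the variance, the expansion of \eqref{eq:op-expr} with Proposition~\ref{prop:simple-op} killing all $k\ge 1$ contributions except the second-derivative one, \eqref{eq:diff-H-eval} and its derivative for the $k=0$ term (the $\ell(\lambda)$- and $(m-1)^2(\sum 1/k)^2$-terms do cancel exactly as you expect), and \eqref{eq:diff2-small} for the $m=3$ correction. The only cosmetic remark is that your hedge ``a harmless factor'' is unnecessary, since $n!/((n-k)!\,(k-1)!) = n\binom{n-1}{k-1}$ exactly, which yields the $2n^{-2}\binom{n-1}{k-1}^{-2}$ weights directly.
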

\begin{proof}
  We first observe that
  \begin{equation} \label{eq:variance}
    \mathbf{Var}[X_{n,\lambda}^{(m)}] = \frac1{P_{n,\lambda}^{(m)}(1)} \cdot \left. \frac{d^2}{dt^2} P_{n,\lambda}^{(m)} \right|_{t=1} + \mathbb{E}[X_{n,\lambda}^{(m)}] - \mathbb{E}[X_{n,\lambda}^{(m)}]^2.
  \end{equation}
  We thus only need to compute $\left. \frac{d^2}{dt^2} P_{n,\lambda}^{(m)} \right|_{t=1}$. We will also use the fact that $[z^0]R_{n,\lambda}(z) = 1$. By differentiating \eqref{eq:op-expr} while simplifying with \eqref{eq:diff1}, we have
  \begin{align*}
    \left. \frac{d^2}{dt^2} P_{n,\lambda}^{(m)} \right|_{t=1} = \frac{|C(\lambda)|}{n} \bigg( \ell(\lambda)(\ell(\lambda)-1) (n!)^{m-1} + &2\ell(\lambda) \left. \frac{d}{dt} H_{n,0}(t)^{m-1}\right|_{t=1} \\
                                                                                                                                          &+ \left. \frac{d^2}{dt^2} R_{n,\lambda} (\mathbf{T}) H_{n,0}(t)^{m-1}\right|_{t=1} \bigg)
  \end{align*}
  By substituting the equality above and the result in Proposition~\ref{prop:expectation} into \eqref{eq:variance}, using \eqref{eq:diff1} and \eqref{eq:diff-H-eval}, we compute the variance of $X_{n,\lambda}^{(m)}$ as
  \begin{equation} \label{eq:variance-mid}
    \mathbf{Var}[X_{n,\lambda}^{(m)}] = (m-1) \sum_{k=1}^n \frac1{k} - (m-1)^2 \left( \sum_{k=1}^n \frac1{k} \right)^2 + (n!)^{1-m} \left. \frac{d^2}{dt^2} R_{n,\lambda} (\mathbf{T}) H_{n,0}(t)^{m-1}\right|_{t=1}
  \end{equation}
  We observe that $[z^0]R_{n,\lambda}(z) = 1$, which leads us to compute the differentiation of \eqref{eq:diff-H-eval} while simplifying with \eqref{eq:diff-H-eval} itself as
  \begin{equation} \label{eq:diff-H-eval-2}
    \frac{d^2}{dt^2} H_{n,0}(t)^{m-1} = (m-1) H_{n,0}(t)^{m-1} \left( (m-1) \left( \sum_{k=0}^{n-1} \frac1{t+k} \right)^2 - \sum_{k=0}^{n-1} \frac1{(t+k)^2} \right).
  \end{equation}

  For the case $m \geq 4$, we use \eqref{eq:diff2-large} and \eqref{eq:diff-H-eval-2} to simplify \eqref{eq:variance-mid} to obtain our claim. For the case $m = 3$, it suffices to replace \eqref{eq:diff2-large} with \eqref{eq:diff2-small} in the computation.
\end{proof}

We can now compute the asymptotic genus distribution of $\mathcal{M}_m(n,\lambda)$.

\begin{thm} \label{thm:asympt-dist}
  Given $m \geq 3$ and a sequence of partitions $(\lambda^{(n)})_{n \geq 1}$ with $\lambda^{(n)} \vdash n$, for an element $M$ chosen uniformly from $\mathcal{M}_m(n,\lambda^{(n)})$, we have
  \[
    \frac{g(M) - \mu_m}{\sigma_m} \underset{n \to \infty}{\longrightarrow} \mathcal{N}(0,1),
  \]
  with
  \[
    \mu_m = \frac{(m-1)n-\ell(\lambda^{(n)})}{2} - \frac{m-1}{2} \ln(n), \quad \sigma_m^2 = \frac{m-1}{4} \ln(n).
  \]
\end{thm}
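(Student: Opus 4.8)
The plan is to apply Lemma~\ref{lem:rr-dist} directly to the genus $g(M)$ of a uniform random $M \in \mathcal{M}_m(n, \lambda^{(n)})$, and then convert the resulting central limit theorem, which comes out phrased in terms of the true mean and variance of $g(M)$, into the claimed one with the explicit quantities $\mu_m$ and $\sigma_m^2$ by a Slutsky-type change of scale. The probability generating polynomial of $g(M)$ is $\Gamma^{(m)}_{n,\lambda^{(n)}}(x)/\Gamma^{(m)}_{n,\lambda^{(n)}}(1)$, which by Theorem~\ref{thm:main} is real-rooted with only non-positive roots, and $g(M)$ takes values in a bounded set of non-negative integers; so Lemma~\ref{lem:rr-dist} applies as soon as $\mathbf{Var}(g(M)) \to \infty$. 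The whole proof thus reduces to two asymptotic estimates: $\mathbf{Var}(g(M)) = \sigma_m^2 + O(1)$ and $\mathbb{E}[g(M)] = \mu_m + O(1)$, the latter being needed only so that $(\mathbb{E}[g(M)] - \mu_m)/\sigma_m \to 0$.

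To get these estimates I would use the relation $g(M) = \tfrac12\big((m-1)n + 1 - X^{(m)}_{n,\lambda^{(n)}}\big)$ read off from \eqref{eq:p-def}, so that $\mathbf{Var}(g(M)) = \tfrac14\mathbf{Var}(X^{(m)}_{n,\lambda^{(n)}})$ and $\mathbb{E}[g(M)]$ is an explicit affine function of $\mathbb{E}[X^{(m)}_{n,\lambda^{(n)}}]$, and then plug in Propositions~\ref{prop:expectation} and~\ref{prop:variance} together with the classical estimates $\sum_{k=1}^n \tfrac1k = \ln n + O(1)$ and $\sum_{k=1}^n \tfrac1{k^2} = O(1)$. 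For the expectation, Proposition~\ref{prop:expectation} gives $\mathbb{E}[X^{(m)}_{n,\lambda^{(n)}}] = \ell(\lambda^{(n)}) + (m-1)\ln n + O(1)$, whence $\mathbb{E}[g(M)] = \tfrac{(m-1)n - \ell(\lambda^{(n)})}{2} - \tfrac{m-1}{2}\ln n + O(1) = \mu_m + O(1)$. For the variance when $m \geq 4$, Proposition~\ref{prop:variance} directly gives $\mathbf{Var}(X^{(m)}_{n,\lambda^{(n)}}) = (m-1)\ln n + O(1)$, hence $\mathbf{Var}(g(M)) = \tfrac{m-1}{4}\ln n + O(1) = \sigma_m^2 + O(1)$.

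The case $m = 3$ is the one point that needs an argument, and I expect it to be the main (indeed only) obstacle: Proposition~\ref{prop:variance} carries the extra term $E_n := 2n^{-2}\sum_{k=1}^{n-1}\binom{n-1}{k-1}^{-2}[z^k]R_{n,\lambda^{(n)}}(z)$, and I must show $E_n = o(\ln n)$. By Proposition~\ref{prop:chara-sum}, $[z^k]R_{n,\lambda^{(n)}}(z) = (-1)^k\chi^{\hook(n,k)}_{\lambda^{(n)}}$, and the trivial bound $|\chi^\theta_\mu| \leq f^\theta$ together with $f^{\hook(n,k)} = \binom{n-1}{k}$ (used already in the proof of Corollary~\ref{coro:bouquet}) gives $|[z^k]R_{n,\lambda^{(n)}}(z)| \leq \binom{n-1}{k}$. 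Hence
\[
  |E_n| \leq 2n^{-2}\sum_{k=1}^{n-1}\binom{n-1}{k-1}^{-2}\binom{n-1}{k} = 2n^{-2}\sum_{k=1}^{n-1}\frac{n-k}{k}\binom{n-1}{k-1}^{-1},
\]
a sum whose $k=1$ term equals $n-1$ and whose remaining terms sum to $O(1)$ (they are $O(1/n)$ as soon as $k \geq 3$), so $|E_n| = O(1/n)$. Therefore $\mathbf{Var}(X^{(3)}_{n,\lambda^{(n)}}) = 2\ln n + O(1)$ and again $\mathbf{Var}(g(M)) = \sigma_3^2 + O(1)$.

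With both estimates in hand, Lemma~\ref{lem:rr-dist} applied to $g(M)$ (whose variance tends to $\infty$) yields $(g(M) - \mathbb{E}[g(M)])/\mathbf{Var}(g(M))^{1/2} \to \mathcal{N}(0,1)$. Writing
\[
  \frac{g(M) - \mu_m}{\sigma_m} = \frac{\mathbf{Var}(g(M))^{1/2}}{\sigma_m}\cdot\frac{g(M) - \mathbb{E}[g(M)]}{\mathbf{Var}(g(M))^{1/2}} + \frac{\mathbb{E}[g(M)] - \mu_m}{\sigma_m},
\]
and using $\mathbf{Var}(g(M))^{1/2}/\sigma_m \to 1$ (from $\mathbf{Var}(g(M)) = \sigma_m^2 + O(1)$ with $\sigma_m^2 \to \infty$) and $(\mathbb{E}[g(M)] - \mu_m)/\sigma_m = O\big((\ln n)^{-1/2}\big) \to 0$, Slutsky's theorem gives the claimed convergence. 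Apart from the estimate on $E_n$ in the $m = 3$ case, every step is routine bookkeeping with harmonic sums and the standard rescaling lemma for convergence in distribution.
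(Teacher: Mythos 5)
Your proposal is correct and follows essentially the same route as the paper: apply Lemma~\ref{lem:rr-dist} (via Theorem~\ref{thm:main}) to the genus, plug the harmonic-sum asymptotics into Propositions~\ref{prop:expectation} and~\ref{prop:variance}, and show the extra $m=3$ variance term is negligible after bounding $\left|[z^k]R_{n,\lambda}(z)\right| \leq \binom{n-1}{k}$. The only (harmless) differences are that you obtain this coefficient bound via $\left|\chi^{\hook(n,k)}_\lambda\right| \leq f^{\hook(n,k)}$ rather than the paper's coefficientwise domination of the product by $(1+z)^{n-1}$, getting $O(1/n)$ instead of the paper's $O(1)$ bound, and that you spell out the Slutsky rescaling step the paper leaves implicit.
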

\begin{proof}
  For $m \geq 4$, we obtain our claim by combining Proposition~\ref{prop:expectation} and Proposition~\ref{prop:variance} with Lemma~\ref{lem:rr-dist} and Theorem~\ref{thm:main}, and also the asymptotic value of the harmonic sum.

  For the case $m = 3$, we start by bounding $[z^k]R_{n,\lambda}(z)$ using its definition:
  \begin{align*}
    \left| [z^k]R_{n,\lambda}(z) \right| &\leq [z^k] (1+z+\cdots+z^{\lambda_1-1}) \prod_{i=2}^{\ell(\lambda)} (1+z^{\lambda_i}) \\
                                         &\leq [z^k] (1+z)^{\lambda_1-1} \prod_{i=2}^{\ell(\lambda)} (1+z)^{\lambda_i} = [z^k](1+z)^{n-1} = \binom{n-1}{k}.
  \end{align*}
  Also observing $[z^{n-1}]R_{n,\lambda}(z) = 1$, we have the following bound:
  \begin{align*}
    \left| 2n^{-2} \sum_{k=1}^{n-1} \binom{n-1}{k-1}^{-2} [z^k]R_{n,\lambda}(z) \right| &\leq 2n^{-2} \sum_{k=1}^{n-1} \binom{n-1}{k-1}^{-2} \binom{n-1}{k} \\
    &\leq 2n^{-1} \sum_{k=1}^{n-1} \binom{n-1}{k-1}^{-1} \leq 2.
  \end{align*}
  Therefore, when $n \to \infty$, this extra term is negligible comparing with $\sigma_m^2$. The rest is the same as for $m \geq 4$.
\end{proof}

\begin{rmk}
  The asymptotic distribution in Theorem~\ref{thm:asympt-dist} is to be expected. In fact, by applying standard techniques in analytic combinatorics (see \cite[Chapter~IX.7.1]{flajolet}) on the Harer-Zagier formula (Equation~(4) in \cite{harer-zagier}) counting one-face maps, we can obtain similar asymptotic distribution (see \cite{chord-asym} for a stronger local limit result), and such a distribution should be typical for many families of maps.
\end{rmk}

\section{Genus distribution of Eulerian fans via transformation} \label{sec:other}

We now propose another family of digraphs whose genus distributions can be obtained via that of $D_{n,\lambda}$. A \tdef{directed forest} is a digraph whose underlying undirected graph is a forest. Given an Eulerian digraph $D$, if there is a vertex $u_\bullet$ with equal in-degree and out-degree such that its removal (including all its edges) leaves a directed forest, then we say that $D$ is an \tdef{Eulerian fan}, with $u_\bullet$ its \tdef{handle}. An example of an Eulerian fan is given in Figure~\ref{fig:eulerian-fan-ex}. Its genus polynomial is
\[
  160 + 16000 x + 249120 x^2 + 766400 x^3 + 350720 x^4.
\]

\begin{figure}[!thbp]
  \centering
  \includegraphics[page=4,width=0.2\textwidth]{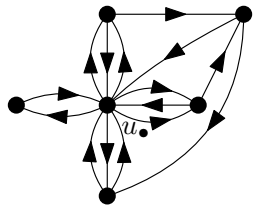}
  \caption{An example of Eulerian fan}
  \label{fig:eulerian-fan-ex}
\end{figure}

For $M$ a face-oriented embedding of an Eulerian fan $D$ with $u_\bullet$ as handle, its \tdef{skeleton}, denoted by $\sk(M)$, is the directed forest obtained by deleting $u_\bullet$ and its edges from $D$, decorated by \tdef{blossoms} standing for the deleted edges (with labels kept), indicating how such edges are ordered in the embedding $M$. For instance, Figure~\ref{fig:eulerian-fan}(b) is the skeleton of Figure~\ref{fig:eulerian-fan}(a).

\begin{figure}[!thbp]
  \centering
  \includegraphics[page=3,width=0.5\textwidth]{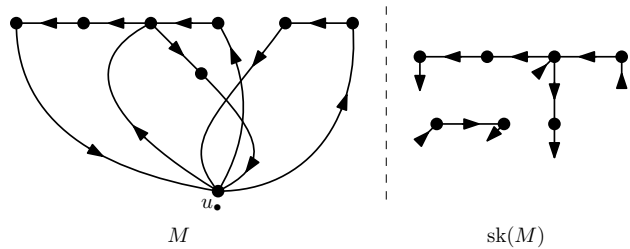}
  \caption{A face-oriented embedding of an Eulerian fan, and its skeleton (labels are omitted)}
  \label{fig:eulerian-fan}
\end{figure}

\begin{prop} \label{prop:eulerian-fan}
  The genus distribution of an Eulerian fan $D$ is log-concave.
\end{prop}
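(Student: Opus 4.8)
The plan is to reduce the genus distribution of an Eulerian fan $D$ to that of some $D_{n,\lambda}$, and then invoke Corollary~\ref{coro:dipole}. The key observation is that a face-oriented embedding $M$ of $D$ is completely determined by its skeleton $\sk(M)$: the directed forest $F = D \setminus u_\bullet$ decorated with blossoms recording the cyclic order in which the deleted edges leave each vertex of $F$, together with the rotation at $u_\bullet$ itself. Since $F$ is a forest, once the blossoms are fixed there is essentially no choice left in embedding $F$ (a forest has a unique embedding up to the placement of its blossoms around each vertex), so the genus of $M$ is governed entirely by how the blossoms of $\sk(M)$ are ``glued back'' through $u_\bullet$. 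I would first make this precise: show that the map $M \mapsto \sk(M)$ is a bijection between face-oriented embeddings of $D$ and decorated skeletons, and that the genus is a function of the gluing data alone.

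Next I would argue that contracting the forest $F$ to a single white vertex turns $D$ into a digraph of the form $D_{n,\lambda}$ (with $u_\bullet$ becoming the black vertex $v_\bullet$), in a way that preserves the genus of every face-oriented embedding. Concretely, each edge of $D$ incident to $u_\bullet$ becomes, after contracting all tree edges of $F$, a pair of half-edges at $v_\bullet$ and at the contracted white vertex; the cyclic order of these half-edges around $v_\bullet$ is unchanged, and the cyclic order around the contracted vertex is obtained from the blossom data of $\sk(M)$ by reading the blossoms in the order induced by a traversal of $F$. The point is that each white vertex $v_i$ of $D_{n,\lambda}$ corresponds to a ``sub-bunch'' of blossoms, and $\lambda$ is the degree profile of these sub-bunches; the tree structure of $F$ only affects the labelling/bookkeeping, not the count of faces. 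Thus contraction gives a genus-preserving bijection (up to a global relabelling constant that cancels in the genus polynomial) between face-oriented embeddings of $D$ and those of $D_{n,\lambda}$ for the appropriate $n$ and $\lambda$, whence $\Gamma_D(x)$ is a positive scalar multiple of $\Gamma_{n,\lambda}(x)$.

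With that established, the conclusion is immediate: by Corollary~\ref{coro:dipole} the genus distribution of $D_{n,\lambda}$ is log-concave, and scaling by a positive constant preserves log-concavity.

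I expect the main obstacle to be making the contraction step rigorous — in particular, verifying carefully that contracting a tree edge of $F$ does not change the number of faces of the embedding (so that the genus is genuinely preserved), and that the resulting rotation around the contracted white vertex, read off from the blossoms via a tree traversal, ranges over \emph{all} of $C(\lambda)$ as $M$ ranges over all face-oriented embeddings of $D$, with the correct multiplicity. This is where one must be attentive to the face-oriented condition: one needs each face of the contracted map to still have consistently oriented boundary, which follows because contracting a tree edge merges two faces lying on the same side (or is a trivial local move) and does not create any orientation conflict. Once this combinatorial bijection is nailed down, everything else is bookkeeping of constants.
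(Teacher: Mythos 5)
Your proposal follows essentially the same route as the paper's proof: contract each tree of the forest $F$ to its own white vertex (your phrase ``to a single white vertex'' should be read per component, as your later discussion of the vertices $v_i$ and of $\lambda$ as the profile of the sub-bunches indicates), observe that contracting tree edges changes neither the genus nor the face-oriented property, so face-oriented embeddings of $D$ project onto those of $D_{n,\lambda}$ with constant multiplicity, and then transfer real-rootedness/log-concavity from $\Gamma_{n,\lambda}$. The only wording to fix is the parenthetical about faces: contracting a non-loop edge does not merge faces but leaves the face count unchanged (which is exactly why the genus is preserved), and the positive constant relating the two polynomials is $\bigl(\prod_i(\lambda_i-1)!\,\lambda_i!\bigr)/|\mathcal{S}(D)|$, matching the paper.
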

\begin{proof}
  Let $\Gamma_D(x)$ be the genus polynomial of a labeled version of $D$, and $u_\bullet$ the handle of $D$. Let $n$ be the in-degree of $u_\bullet$, and $F$ the forest obtained by deleting $u_\bullet$ and its edges. We can suppose that the labeling of $D$ is such that the $n$ out-going edges of $u_\bullet$ are labeled from $1$ to $n$, and the $n$ in-coming ones from $n+1$ to $2n$. We denote by $\lambda \vdash n$ the partition formed by the number of in-coming edges from each component of $F$ to $u_\bullet$. We denote by $\mathcal{S}(D)$ the set of all the possible skeletons of face-oriented embeddings of $D$.

  Each component of $F$ in $M$ can be contracted into a white vertex with equal in-degree and out-degree. We thus obtain a face-oriented embedding of $D_{n,\lambda}$, denoted by $\ct(M)$, with labels determined by those in $M$. However, we notice that the cyclic order of edges around white vertices of $\ct(M)$ is the same as that of $\sk(M)$. Therefore, we have a $|\mathcal{S}(D)|$-to-$\left(\prod_{i}(\lambda_i-1)!\lambda_i!\right)$ correspondence between $\mathcal{M}(D)$ and $\mathcal{M}(D_{n,\lambda})$. Furthermore, since $F$ is a forest, we have $g(\ct(M)) = g(M)$. We thus have
  \[
    \Gamma_D(x) = \frac1{| \mathcal{S}(D) |} \left(\prod_{i=1}^{\ell(\lambda)} (\lambda_i-1)! \lambda_i! \right) \Gamma_{n,\lambda}(x).
  \]
  By Theorem~\ref{thm:main}, $\Gamma_D(x)$ is real-rooted with non-positive roots. Therefore, the genus distribution of $D$ is log-concave by Lemma~\ref{lem:real-rooted}.
\end{proof} 

\bibliographystyle{alpha}
\bibliography{graph-genus-dist}

\newcommand{\etalchar}[1]{$^{#1}$}
\begin{thebibliography}{GMTW15}

\bibitem[BCMM02]{embed-digraph}
C.~P. Bonnington, M.~Conder, M.~Morton, and P.~McKenna.
\newblock Embedding digraphs on orientable surfaces.
\newblock {\em J. Combin. Theory Ser. B}, 85(1):1--20, 2002.

\bibitem[Ben73]{bender-normal}
E.~A. Bender.
\newblock Central and local limit theorems applied to asymptotic enumeration.
\newblock {\em J. Combin. Theory Ser. A}, 15:91--111, 1973.

\bibitem[BM13]{bernardi-morales}
O.~Bernardi and A.~Morales.
\newblock Bijections and symmetries for the factorizations of the long cycle.
\newblock {\em Adv. in Appl. Math.}, 50:702--722, 2013.

\bibitem[BMS00]{BMS}
M.~Bousquet-M{\'e}lou and G.~Schaeffer.
\newblock Enumeration of planar constellations.
\newblock {\em Adv. in Appl. Math.}, 24(4):337--368, 2000.

\bibitem[Br{\"a}15]{log-concave-survey}
P.~Br{\"a}nd{\'e}n.
\newblock {\em Handbook of {E}numerative {C}ombinatorics}, chapter Unimodality,
  {L}og-{C}oncavity, {R}eal-{R}ootedness and {B}eyond, pages 437--484.
\newblock CRC Press, 2015.

\bibitem[CFG{\etalchar{+}}11]{graphfive}
G.~Chapuy, {\'E}.~Fusy, O.~Gim{\'e}nez, B.~Mohar, and M.~Noy.
\newblock Asymptotic enumeration and limit laws for graphs of fixed genus.
\newblock {\em J. Combin. Theory Ser. A}, 118(3):748--777, 2011.

\bibitem[CGH14]{four-reg-genus}
Y.~Chen, J.~L. Gross, and X.~Hu.
\newblock Enumeration of digraph embeddings.
\newblock {\em European J. Combin.}, 36:660--678, 2014.

\bibitem[Che08]{defend-stahl}
Y.~Chen.
\newblock A note on a conjecture of {S}. {S}tahl.
\newblock {\em Canad. J. Math.}, 60(4):958--959, 2008.

\bibitem[CL10]{stahl-refute}
Y.~Chen and Y.~Liu.
\newblock On a {C}onjecture of {S}. {S}tahl.
\newblock {\em Canad. J. Math.}, 62(5):1058--1059, 2010.

\bibitem[CP13]{chord-asym}
S.~Chmutov and B.~Pittel.
\newblock The genus of a random chord diagram is asymptotically normal.
\newblock {\em J. Combin. Theory Ser. A}, 120(1):102--110, 2013.

\bibitem[Fan14]{fang2014generalization}
W.~Fang.
\newblock A generalization of the quadrangulation relation to constellations
  and hypermaps.
\newblock {\em J. Combin. Theory Ser. A}, 127:1--21, 2014.

\bibitem[FKMS18]{cubic-asympt}
W.~Fang, M.~Kang, M.~Mo{\ss}hammer, and P.~Spr{\"u}ssel.
\newblock Cubic graphs and related triangulation on orientable surfaces.
\newblock {\em Electronic J. Combin.}, 25(1):P1.30, 2018.

\bibitem[FS09]{flajolet}
Ph. Flajolet and R.~Sedgewick.
\newblock {\em Analytic combinatorics}.
\newblock Cambridge University Press, Cambridge, 2009.

\bibitem[GMTW15]{log-concave-genus}
J.~L. Gross, T.~Mansour, T.~W. Tucker, and D.~G.~L. Wang.
\newblock Log-concavity of combinations of sequences and applications to genus
  distributions.
\newblock {\em SIAM J. Discrete Math.}, 29(2):1002--1029, 2015.

\bibitem[GRT89]{bouquet-genus}
J.~L. Gross, D.~P. Robbins, and T.~W. Tucker.
\newblock Genus distributions for bouquets of circles.
\newblock {\em J. Combin. Theory Ser. B}, 47(3):292--306, 1989.

\bibitem[HZ86]{harer-zagier}
J.~Harer and D.~Zagier.
\newblock The euler characteristic of the moduli space of curves.
\newblock {\em Invent. Math.}, 85:457--485, 1986.

\bibitem[Jac87]{jackson-character}
D.~M. Jackson.
\newblock Counting cycles in permutations by group characters, with an
  application to a topological problem.
\newblock {\em Trans. Amer. Math. Soc.}, 299(2):785--801, 1987.

\bibitem[Jac88]{jackson-factorization}
D.~M. Jackson.
\newblock Some combinatorial problems associated with products of conjugacy
  classes of the symmetric group.
\newblock {\em J. Combin. Theory Ser. A}, 49:363--369, 1988.

\bibitem[JV90]{JV1990a}
D.~M. Jackson and T.~I. Visentin.
\newblock A character-theoretic approach to embeddings of rooted maps in an
  orientable surface of given genus.
\newblock {\em Trans. Amer. Math. Soc.}, 322(1):343--363, 1990.

\bibitem[LW04]{unicursal}
V.~A. Liskovets and T.~R.~S. Walsh.
\newblock Enumeration of eulerian and unicursal planar maps.
\newblock {\em Discrete Math.}, 282:209--221, 2004.

\bibitem[LW07]{counter-example-undirected}
L.~L. Liu and Y.~Wang.
\newblock A unified approach to polynomial sequences with only real zeros.
\newblock {\em Adv. in Appl. Math.}, 38(4):542--560, 2007.

\bibitem[LZ04]{LandoZvonkine}
S.~K. Lando and A.~K. Zvonkin.
\newblock {\em Graphs on surfaces and their applications}, volume 141 of {\em
  Encyclopaedia of Mathematical Sciences}.
\newblock Springer-Verlag, 2004.
\newblock Appendix by D. B. Zagier.

\bibitem[PS00]{postnikov-stanley}
A.~Postnikov and R.~P. Stanley.
\newblock Deformations of {C}oxeter hyperplane arrangements.
\newblock {\em J. Combin. Theory Ser. A}, 91(1--2):544--597, 2000.

\bibitem[PS02a]{poulalhon2002factorizations}
D.~Poulalhon and G.~Schaeffer.
\newblock Factorizations of large cycles in the symmetric group.
\newblock {\em Discrete Math.}, 254(1-3):433--458, 2002.

\bibitem[PS02b]{bipartite-eulerian}
D.~Poulalhon and G.~Schaeffer.
\newblock A note on {B}ipartite {E}ulerian {P}lanar {M}aps.
\newblock https://hal.archives-ouvertes.fr/inria-00101066, May 2002.

\bibitem[Ser77]{serre1977linear}
J.-P. Serre.
\newblock {\em Linear representations of finite groups}.
\newblock Springer-Verlag, New York, 1977.
\newblock Translated from the second French edition by Leonard L. Scott,
  Graduate Texts in Mathematics, Vol. 42.

\bibitem[Sta97]{stahl-genus}
S.~Stahl.
\newblock On the zeros of some genus polynomials.
\newblock {\em Canad. J. Math.}, 49(3):617--640, 1997.

\bibitem[Sta99]{Stanley:EC2}
R.~P. Stanley.
\newblock {\em Enumerative combinatorics. {V}ol. 2}, volume~62 of {\em
  Cambridge Studies in Advanced Mathematics}.
\newblock Cambridge University Press, Cambridge, 1999.
\newblock With a foreword by Gian-Carlo Rota and appendix 1 by Sergey Fomin.

\bibitem[Sta11]{stanley-factorization}
R.~P. Stanley.
\newblock Two enumerative results on cycles of permutations.
\newblock {\em European J. Combin.}, 32(6):937--943, 2011.

\bibitem[Zag95]{zagier1995distribution}
D.~Zagier.
\newblock On the distribution of the number of cycles of elements in symmetric
  groups.
\newblock {\em Nieuw archief voor wiskunde}, 13:489--495, 1995.

\end{thebibliography}

\end{document}